\theoremstyle{plain}
\newtheorem{theorem}{Theorem}
\newtheorem{proposition}[theorem]{Proposition}
\newtheorem{lemma}[theorem]{Lemma}
\newtheorem{corollary}[theorem]{Corollary}
\theoremstyle{definition}
\newtheorem{definition}[subsubsection]{Definition}
\newtheorem{example}[subsubsection]{Example}
\newtheorem{remark}[subsubsection]{Remark}
\def\acknowledgment{\par\addvspace{17pt}\small\rmfamily
\trivlist\if!\ackname!\item[]\else
\item[\hskip\labelsep
{\bfseries\ackname}]\fi}
\def\C{\mathbb{C}}
\def\R{\mathbb{R}}
\def\N{\mathbb{N}}
\def\D{\mathbb{D}}
\def\P{\mathbb{P}}
\def\cS{\widehat{\Omega}}
\def\cJ{\mathcal{J}_R}
\newcommand{\hC}{\widehat{\mathbb{C}}}
\title{Reduced dynamical systems}
\author{Luka Boc Thaler and Uro\v s  Kuzman}
\begin{document}

\address{L. Boc Thaler: Faculty of Education, University of Ljubljana, SI--1000 Ljubljana, Slovenia. Institute of Mathematics, Physics and Mechanics, Jadranska 19, 1000 Ljubljana, Slovenia.} \email{luka.boc@pef.uni-lj.si}
\address{ U. Kuzman: Faculty of Mathematics and Physics\\
University of Ljubljana\\Slovenia -and-   Institute of Mathematics, Physics and Mechanics\\ Ljubljana
Slovenia} \email{uros.kuzman@fmf.uni-lj.si}

\begin{abstract}
We consider the dynamics of complex rational maps on $\hC$. We prove that, after reducing their orbits to a fixed number of positive values representing the Fubini-Study distances between finitely many initial elements of the orbit and the origin, ergodic properties of the rational map are preserved.
\end{abstract}

\maketitle
\tableofcontents

\section{Introduction}

In this paper we investigate dynamical properties of complex rational maps that are preserved after reducing their orbits to a finite number of real values.  Our work is  motivated by the paper of Forn\ae ss and Peters \cite{FP}, in which they prove that, in the case of a non-exceptional polynomial, one can recover its topological and measure theoretical entropy from the real parts of finitely many elements in every orbit. This result was generalized further to all polynomials by the first named author \cite{Bo}. In the present paper we deal with complex rational maps defined on the Riemann sphere $\hC$. Since there is no natural value that could be assigned to the real part of $\infty$, we instead use the Fubini-Study distance between the origin and a given element of the orbit. Our goal is to determine for which complex rational maps the two above mentioned entropies are preserved after such reduction.

Let us recall the definition of the Fubini-Study distance on the Riemann sphere.
For $z,w \in \C\P^1$ the normalized form of the distance is given by
$$
d_{FS}(z,w)=\frac{2}{\pi}\arccos \frac{|<z,w>|}{||z||\cdot||w||}.
$$
In this set up $0\in\hC$ corresponds to the point $[0:1]$ and $\infty\in\hC$ corresponds to $[1:0]$. Hence, for $z\in\hC$ we can write the following expressions
$$
d_{FS}(0,z)=\frac{2}{\pi}\arccos\sqrt{\frac{1}{1+|z|^2}}
$$
and $d_{FS}(0,\infty)=1$. Note that $d_{FS}(0,z)=d_{FS}(0,w)$ if and only if $|z|=|w|$. Thus, given $\rho\in[0,1)$ the level sets $\Omega_r=\{z\in  \hC\mid d_{FS}(0,z)=\rho\}$ agree with circles $\partial\D_r\subset\C$ of radius $r=\tan{\frac{\pi \rho}{2}}$ and centered at the origin. We call such circles the \emph{prime circles}. Moreover, note that $\Omega_0=\{0\}$ and $\Omega_1=\{\infty\}$ for $r=0$ and $r=1$ respectively. 

Let $R$ be a complex rational map of degree $d\geq 2$ and let $R^n$ denote its $n^{th}$ iterate. Given $z_0\in\hC$ we define $z_n=R^n(z_0)$ and we call $(z_n)$ the orbit of $z_0$. Further, we consider the sequence of Fubini-Study distances $(d_{FS}(0,z_n))_{n\geq 0}$. We prove that such a sequence of real values is completely determined by its first $N=N(R)\geq 0$ elements. 

\begin{lemma}\label{reduction2} Let $R$ be a rational map of degree $d\geq 2$. There exists $N\in\mathbb{N}_0$ such that if   
$d_{FS}(0,z_n)=d_{FS}(0,w_n)$ for all $n\leq N$, then $d_{FS}(0,z_n)=d_{FS}(0,w_n)$ for every $n\in\mathbb{N}_0$. 
\end{lemma}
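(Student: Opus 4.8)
The plan is to reduce the statement to a question about moduli and then to convert the modulus condition into a descending chain of complex algebraic varieties, to which Noetherianity applies. First I would use the observation recorded just above the statement, namely that $d_{FS}(0,z)=d_{FS}(0,w)$ holds precisely when $|z|=|w|$ (with the convention that both sides equal $\infty$ simultaneously). Thus the assertion is equivalent to finding $N=N(R)$ such that, for any two $R$-orbits $(z_n)$ and $(w_n)$, the equalities $|z_n|=|w_n|$ for all $n\le N$ force $|z_n|=|w_n|$ for every $n\in\mathbb{N}_0$.

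The key idea is to \emph{complexify} the real-analytic condition $|z|^2=|w|^2$. I introduce conjugate variables and the map $G:=R\times R\times\overline R\times\overline R$ on $(\C\P^1)^4$ with coordinates $(z,w,z',w')$, where $\overline R$ is the rational map obtained by conjugating the coefficients of $R$; since rational maps extend to morphisms of $\C\P^1$, the map $G$ is a morphism of the projective variety $(\C\P^1)^4$. Writing $|z|^2=z\bar z$ and treating $z'=\bar z$, $w'=\bar w$ as the entries of a ``real slice'', the equation $|z|^2=|w|^2$ becomes the equation $z z'=w w'$, which in homogeneous coordinates is multihomogeneous and hence cuts out a Zariski-closed hypersurface $H\subset(\C\P^1)^4$. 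Two facts must be checked: first, the real slice $S=\{(z,w,\bar z,\bar w)\}$ is $G$-invariant, because $\overline R(\bar z)=\overline{R(z)}$; and second, for $p=(z_0,w_0,\bar z_0,\bar w_0)\in S$ one has $G^n(p)\in H$ exactly when $|z_n|=|w_n|$.

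I then set $A_N:=\bigcap_{n=0}^N G^{-n}(H)$. Because $G$ is a morphism and $H$ is Zariski-closed, each $A_N$ is a Zariski-closed subset of $(\C\P^1)^4$, and $A_0\supseteq A_1\supseteq\cdots$ is a descending chain. Since $(\C\P^1)^4$ is a Noetherian space in the Zariski topology, the chain stabilizes: there is $N$ with $A_N=A_{N+1}$. Using the recursion $A_{m+1}=H\cap G^{-1}(A_m)$ one verifies by induction that then $A_N=A_m$ for all $m\ge N$, so $A_N=\bigcap_{n=0}^{\infty}G^{-n}(H)$; crucially this $N$ depends only on $G$, hence only on $R$, and not on the chosen orbits. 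Finally, if $|z_n|=|w_n|$ for all $n\le N$, then $p=(z_0,w_0,\bar z_0,\bar w_0)\in A_N=\bigcap_{n}G^{-n}(H)$, whence $G^n(p)\in H$ and therefore $|z_n|=|w_n|$ for all $n$, which is the desired conclusion.

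I expect the main obstacle to be the algebraic bookkeeping of the complexification: writing the modulus equality as a genuine algebraic hypersurface in homogeneous coordinates so that the point at infinity (where $z=\infty$ corresponds to $[1:0]$) is treated uniformly, and confirming both that $\overline R$ is algebraic and that it renders the real slice $S$ invariant. Once the descending chain of honest projective subvarieties is in place, the uniform stabilization index $N=N(R)$ follows immediately from Noetherianity.
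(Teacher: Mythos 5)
Your proof is correct, but it takes a genuinely different route from the paper. The paper works directly with the real-analytic functions $\Phi_n(z,w)=d_{FS}(0,R^n(z))-d_{FS}(0,R^n(w))$ on the compact manifold $\hC\times\hC$, forms the descending chain of zero sets $Z_n=\{\Phi_0=\cdots=\Phi_n=0\}$, and invokes a theorem of Frisch that the ring of global real-analytic functions on a compact real-analytic manifold is Noetherian to conclude that the chain stabilizes. You instead exploit the special algebraic nature of the equality condition: since $d_{FS}(0,z)=d_{FS}(0,w)$ iff $|z|=|w|$, the condition complexifies to the multihomogeneous hypersurface $z_0z_0'w_1w_1'=w_0w_0'z_1z_1'$ in $(\C\P^1)^4$, the conjugate dynamics $G=R\times R\times\overline R\times\overline R$ is a morphism preserving the real slice via $\overline R(\bar z)=\overline{R(z)}$, and stabilization of $A_N=\bigcap_{n\le N}G^{-n}(H)$ follows from the elementary Noetherianity of the Zariski topology, with the recursion $A_{m+1}=H\cap G^{-1}(A_m)$ correctly upgrading $A_N=A_{N+1}$ to stabilization of the whole chain; your boundary checks at $0$ and $\infty$ on the real slice are exactly the points that need verifying, and they work out. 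What each approach buys: yours is more self-contained, replacing a nontrivial result in real-analytic geometry by a first-chapter fact of algebraic geometry, and it is closer in spirit to the semi-algebraic techniques of Forn\ae ss--Peters that this paper explicitly departs from; the paper's argument, by contrast, is insensitive to the algebraic structure of the observable --- it would apply verbatim with $d_{FS}(0,\cdot)$ replaced by any real-analytic function, whereas your complexification hinges on the equality $d_{FS}(0,z)=d_{FS}(0,w)\Leftrightarrow |z|=|w|$ reducing to a polynomial relation --- and it sets up the real-analytic/sub-analytic framework ($Z_n$, and later $\Lambda$, $S$) that the rest of the paper relies on.
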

\noindent Let $N$ be as in lemma above. We define $\Phi:\hC\rightarrow [0,1]^{N+1}$ to be the map given by
$$\Phi(z_0):=(d_{FS}(0,z_0),\ldots,d_{FS}(0,z_N)).$$ 
Then the action of $R$ can be pushed down to a compact subset $\cS:=\Phi(\hC)\subset [0,1]^{N+1}$, i.e. there exists a map $Q:\cS\rightarrow \cS$ such that the following diagram commutes. 
\begin{equation*}
  \xymatrix@R+2em@C+2em{
  \hC \ar[r]^-{R} \ar[d]_-{\Phi} & \hC \ar[d]^-{\Phi} \\
  \cS \ar[r]_-Q & \cS
  }
 \end{equation*}
In particular, $Q$ is given by 
$$
Q(d_{FS}(0,z_0),\ldots,d_{FS}(0,z_N)):=(d_{FS}(0,z_1),\ldots,d_{FS}(0,z_{N+1})).
$$ 
Even though the map $\Phi$ is never an embedding, many properties of the dynamical system $(\hC, R)$ can be observed by analyzing the reduced system $(\cS, Q)$. In this paper we focus only on the ergodic properties of the dynamical systems.

Let $\cJ$ denote the {\it Julia set} of $R$. By classical results of Lyubich and Ma\~{n}\'{e} \cite{Lyu, Mane} every rational map $R$ of degree $d\geq 2$ admits a unique invariant, ergodic probability measure $\mu_R$ on $\hC$ of maximal entropy $\log d$. Moreover, the equidistribution property for repelling periodic points of $R$ implies that $\text{supp}\mu_R=\cJ$.   

Let $\nu_R:=\Phi_*(\mu_R)$ be the corresponding measure on the set $\cS$ and let $\D_r$ denote the disk of radius $r$ centered at the origin. The following is our main result. 

\begin{theorem}\label{main1} Let $R$ be a rational map of degree $d\geq 2$. If $\cJ$ is not contained in a prime circle, then $\nu_R$ is the unique invariant, ergodic measure of maximal entropy $\log d$ on $\cS$.
\end{theorem}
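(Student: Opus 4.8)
The plan is to exploit that $(\cS,Q)$ is a topological factor of $(\hC,R)$ via the semiconjugacy $\Phi$, and to combine this with the Lyubich--Ma\~{n}\'{e} uniqueness of $\mu_R$ together with the Abramov--Rokhlin relative entropy formula. First I would record the easy half: $\nu_R=\Phi_*\mu_R$ is $Q$-invariant because the diagram commutes, and it is ergodic since the image of an ergodic measure under a factor map is ergodic. Moreover, as $Q$ is a measure-theoretic factor of $R$, one has $h_{\nu_R}(Q)\le h_{\mu_R}(R)=\log d$. Hence the whole content of the existence part is the reverse inequality, i.e. that the reduction $\Phi$ loses no entropy. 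Writing $\mathcal F=\Phi^{-1}(\mathcal B_{\cS})$ for the $R$-invariant sub-$\sigma$-algebra generated by $\Phi$, the relative (fibered) entropy formula gives $h_{\mu_R}(R)=h_{\nu_R}(Q)+h_{\mu_R}(R\mid\mathcal F)$, so it suffices to show that the relative entropy $h_{\mu_R}(R\mid\mathcal F)$ vanishes.

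Second, I would analyze the fibers of $\Phi$. By Lemma~\ref{reduction2} the fiber through $z$ equals $F(z)=\{w:\ |R^n w|=|R^n z|\ \text{for all }n\ge 0\}$, an intersection of the prime circle $\{|w|=|z|\}$ with the real-analytic sets $\{|R^n w|=|R^n z|\}$. Each additional constraint either holds on the whole circle or restricts it to a proper real-analytic subset, which is finite; consequently every fiber $F(z)$ is \emph{either} the full prime circle $\{|w|=|z|\}$ \emph{or} a finite set. If the fibers are finite for $\mu_R$-a.e. point, then the conditional measures $\mu_{R,y}$ of the disintegration over $\nu_R$ are supported on finite sets, so for any finite partition $\xi$ and all $m$ one has $H_{\mu_{R,y}}\!\big(\bigvee_{i=0}^{m-1}R^{-i}\xi\big)\le\log|F(y)|$, an $m$-independent bound that is finite $\nu_R$-a.e.; dividing by $m$ and applying dominated convergence gives $h_{\mu_R}(R\mid\mathcal F)=0$. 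Thus the existence part is reduced to the single assertion that the \emph{exceptional set} $G=\{z:\ F(z)\ \text{is a full prime circle}\}$ satisfies $\mu_R(G)=0$.

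The heart of the argument, and the step I expect to be the main obstacle, is precisely $\mu_R(G)=0$. Note that $z\in G$ exactly when every forward image $R^n(\{|w|=|z|\})$ lies on a prime circle, so $G$ is forward invariant ($R(G)\subseteq G$); since $\mu_R$ is $R$-invariant this forces $G=R^{-1}G$ modulo $\mu_R$, and ergodicity gives $\mu_R(G)\in\{0,1\}$. Assume for contradiction that $\mu_R(G)=1$. The set of radii $r$ for which $|R|$ is constant on $\{|z|=r\}$ cannot accumulate in $(0,\infty)$: if it did, then $\log|R|$ would be radial on an annulus, hence of the form $a\log|z|+b$, forcing $R(z)=\beta z^a$ to be a monomial, whose Julia set is a prime circle and is excluded by hypothesis. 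Consequently $G$ is carried by countably many prime circles, $\Phi$ is constant on each of them, and $\nu_R$ is an ergodic measure supported on a countable set; such a measure is carried by a single periodic orbit of $Q$. Pulling back, $\cJ=\mathrm{supp}\,\mu_R$ is contained in finitely many concentric prime circles cyclically permuted by $R$. The remaining and most delicate point is to upgrade ``finitely many concentric circles'' to ``one'': passing to the iterate that fixes each circle, one shows that a rational map fixing two distinct concentric circles and expanding on them is impossible, since a proper holomorphic self-map of the intervening annulus must have degree one (the modulus being a conformal invariant), which is incompatible with the degree $\ge 2$ expansion required on a Julia circle; equivalently one may invoke the classification of rational maps whose Julia set lies in a real-analytic curve. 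Either way $\cJ$ is forced into a single prime circle, contradicting the hypothesis, so $\mu_R(G)=0$. Together with the previous paragraph this yields $h_{\nu_R}(Q)=\log d$, and by the variational principle $h_{\mathrm{top}}(Q)=\log d$ with $\nu_R$ attaining it.

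Finally, for uniqueness, let $\nu$ be any ergodic $Q$-invariant measure on $\cS$ with $h_\nu(Q)=\log d$. Since $\Phi$ is continuous and surjective onto $\cS$, choose a Borel measure $\lambda$ on $\hC$ with $\Phi_*\lambda=\nu$ and form the averages $\tfrac1n\sum_{j=0}^{n-1}R^j_*\lambda$; because $\Phi_*(R^j_*\lambda)=Q^j_*\nu=\nu$, any weak-$*$ limit $\mu$ is an $R$-invariant lift of $\nu$. By the relative entropy formula $h_\mu(R)=h_\nu(Q)+h_\mu(R\mid\mathcal F)\ge\log d$, while $h_\mu(R)\le h_{\mathrm{top}}(R)=\log d$, so $h_\mu(R)=\log d$. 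Applying the ergodic decomposition together with the Lyubich--Ma\~{n}\'{e} uniqueness of the measure of maximal entropy forces $\mu=\mu_R$, and therefore $\nu=\Phi_*\mu=\Phi_*\mu_R=\nu_R$. This shows that $\nu_R$ is the unique invariant ergodic measure of maximal entropy $\log d$ on $\cS$.
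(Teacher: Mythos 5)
Your proposal takes a genuinely different route from the paper, and most of it is sound. The paper proves the key inequality $h_{\nu_R}(Q)\ge\log d$ by a direct Bowen-ball count: it introduces the semi-analytic set $S=\Phi^{-1}(\mathrm{Sing}(\cS))$, and when $\mu_R(S)=0$ it decomposes $\Phi^{-1}(B(x_0,\epsilon,n))$ into pieces on which $R$ and $\Phi$ are injective (Corollary \ref{cover}) and uses the Jacobian property $R^*\mu_R=d\,\mu_R$ to get the bound $(2d^2)^{n/k}d^{-(n-n/k)}$; when $\mu_R(S)>0$ it invokes Lemma \ref{circle} (via \cite[Theorem 2]{BE}) to place $\cJ$ on a circle of $\hC$ and reruns the count on that circle, where $\Phi$ is at most two-to-one off a null set. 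You instead use the fiber dichotomy --- each $\Phi$-fiber is a full prime circle or finite, which is correct by Lemma \ref{reduction2} together with the identity theorem on circles (equivalently, the paper's degree-$2d^n$ computation in Proposition \ref{cardinality}) --- and then kill the relative entropy over the factor. Two remarks here: for non-invertible maps you should not quote Abramov--Rokhlin but rather the Ledrappier--Walters relativized variational principle (or Bowen's fiber-entropy inequality), which with fibers finite $\mu_R$-a.e.\ does give $h_{\nu_R}(Q)=h_{\mu_R}(R)$; and your disintegration bound $H_{\mu_{R,y}}\le\log|F(y)|$ with the dominating constant $\log|\xi|$ is fine. Your approach is pleasantly uniform: the situation the paper must treat as a separate case ($\cJ$ inside a \emph{non-prime} circle, where $\mu_R(S)>0$) falls automatically into your finite-fiber branch, and your uniqueness-by-lifting (Krylov--Bogolyubov averaging of a Borel lift, then ergodic decomposition plus Lyubich--Ma\~n\'e) avoids the bounded-fiber hypothesis that routes the paper through Proposition \ref{proposition1}, Proposition \ref{cardinality} and Remark \ref{remark1}. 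What the paper's route buys in exchange is the quantitative covering structure and Lemma \ref{circle}, which are of independent interest.

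There is, however, one genuinely flawed step: the ``upgrade finitely many concentric circles to one'' at the end of your third paragraph. Having reduced to $\cJ\subseteq C_1\cup\dots\cup C_p$ with concentric prime circles satisfying $R(C_i)\subseteq C_{i+1}$, you claim $R^p$ is ``a proper holomorphic self-map of the intervening annulus'' and apply the modulus invariant. But nothing maps the annulus between consecutive circles into itself: $R^p(C_i)\subseteq C_i$ constrains only the circles, while the open annulus lies in the Fatou set and its image under $R^p$ can spread over all of $\hC$, so the degree-one argument never gets started. The gap is easily patched, and in fact the paper already contains the needed tool: if $p\ge 2$ then $R$ maps two \emph{distinct} prime circles into prime circles, so by Lemma \ref{R-invariant} one has $R(z)=az^{\pm d}$, whose Julia set is a prime circle --- contradicting your standing hypothesis; and if $p=1$ then $\cJ\subseteq C_1$ contradicts it directly. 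Alternatively, your own fallback works once made precise: concentric circles are pairwise disjoint, so a relatively open subset of $\cJ$ lies on a single smooth circle, whence by \cite[Theorem 2]{BE} $\cJ$ lies in a circle of $\hC$; that circle must meet some $C_i$ in an infinite set, hence coincide with it, forcing $\cJ$ into a prime circle. With either patch (and the citation fix above) your argument is complete.
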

\noindent We also prove that if $\cJ$ is contained in some prime circle, then both, the topological entropy of $Q$ and the measure theoretical entropy of $\nu_R$, are equal to zero (see Proposition \ref{zero}). Therefore, we call such maps $R$ strongly exceptional.

The paper is organized as follows. In \S 2 we recall some basic facts concerning ergodic theory and prove Lemma \ref{reduction2}. In \S 3 we introduce the notion of exceptional maps and study the semi-analytic set of mirrored points $M$, i.e. the set of points $z\in M$ for which there exists $w\neq z$ such that $\Phi(z)=\Phi(w)$. We prove that either $M$ contains a dense open subset of $\hC$ or else $\dim_{\R}M=1$ (see Theorem \ref{mirrored}). In \S 4 we give the proof of Theorem \ref{main1}. Moreover, we prove Lemma \ref{circle} which may be of independent interest. It states that whenever $\mu_R$ puts a mass on a one dimensional semi-analytic set, the Julia set $\cJ$ is contained an invariant circle of $\hC$. 

Finally note that the techniques used in this paper are quite different from those in \cite{FP}. In particular, we deal with proper real analytic maps and sub-analytic sets instead of real algebraic maps and semi-algebraic sets. 

\medskip
{\it Acknowledgements}: The research was initiated during the stay of both authors at the University of Oslo, Spring 2017. They want to thank prof. Erlend F. Wold for his hospitality. The authors would also like to thank Han Peters for helpful discussions on this topic.

\section{Preliminary results}

\subsection{Entropy} Let $(X,\rho)$ be a compact metric space and $F\colon X\to X$ a continuous map. For $\varepsilon>0$ we define a $(n,\varepsilon)$-ball centered in $x\in X$ as
$$B(x,\varepsilon,n)=\{y\in X\mid \rho(F^k(x),F^k(y))<\varepsilon,\; 0\leq k<n\}.$$
Let $N(n,\varepsilon)$ denote the maximal number of pairwise disjoint $(n,\varepsilon)$-balls in $X$ and define
$$
H_{\varepsilon}=\limsup_{n\in\mathbb{N}} \frac{1}{n}\log{N(n,\varepsilon)}.
$$
The \emph{topological entropy} of $F$ is defined as
$$
h_{top}(F)=\lim_{\varepsilon\rightarrow 0}H_{\varepsilon}.
$$
Further, let $\lambda$ be a probability measure on $X$ and define
$$
h_{\lambda}(F,x,\varepsilon)=\liminf_{n\in\mathbb{N}} -\frac{1}{n}\log \lambda(B(x,\varepsilon,n))
$$
and
$$
h_{\lambda}(F,x)=\sup_{\varepsilon>0}h_{\lambda}(F,x,\varepsilon).
$$
If $\lambda$ is an invariant measure then $h_{\lambda}(F,x)\geq h_{\lambda}(F,F(x))$ and if $\lambda$ is also ergodic this function is constant $\lambda$-almost everywhere.  The \emph{measure theoretic entropy} $h_{\lambda}(F)$ is then defined to be this constant. Note that it is independent of the metric. In fact it is a
topological invariant.

Let $X$ and $Y$ be compact metric spaces and let $R\colon X\rightarrow X$ and $Q\colon Y\rightarrow Y$ be continuous maps which are semi-conjugated. That is, there exist a continuous, surjective map $\Phi\colon X\rightarrow Y$ such that the following diagram commutes. 
 \begin{equation*}
  \xymatrix@R+2em@C+2em{
  X \ar[r]^-{R} \ar[d]_-{\Phi} & X \ar[d]^-{\Phi} \\
  Y \ar[r]_-Q & Y
  }
 \end{equation*}
Moreover, we assume that the fibers of $\Phi$ are finite and that their cardinality is uniformly bounded from above, i.e. there exist $C_R\in\mathbb{N}$ such that:
\begin{equation}\label{card}\# \Phi^{-1}(y)\leq C_R, \;\; \forall y\in Y.\end{equation}
In such a set-up we have the following proposition.   
\begin{proposition}\label{proposition1}
Let $R,$ $Q$ and $\Phi$ be defined as above and let $\mu$ be an invariant, ergodic measure on $X$. If the condition (\ref{card}) is satisfied then the following statements hold:
\begin{itemize}
\item[(a)] The topological entropies of maps $Q$ and $R$ agree, t.i., $h_{top}(R)=h_{top}(Q).$
\item[(b)] The measure $\nu=\Phi_*(\mu)$ is invariant and ergodic on $Y$. 
\item[(c)] Suppose $h_{\mu}(R)=h_{\nu}(Q)$. If $\mu$ is the unique measure of maximal entropy on $X$ then $\nu$ is also the unique measure of maximal entropy on $Y$.
\end{itemize}
\end{proposition}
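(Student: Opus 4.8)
I would prove the three parts in the order (b), (a), (c), since (b) needs nothing beyond the commuting diagram, while (c) relies on the identity $h_{top}(R)=h_{top}(Q)$ obtained in (a).

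\emph{Part (b).} Invariance is purely formal: from $Q\circ\Phi=\Phi\circ R$ and $R_*\mu=\mu$ one gets $Q_*\nu=Q_*\Phi_*\mu=(\Phi\circ R)_*\mu=\Phi_*R_*\mu=\Phi_*\mu=\nu$. For ergodicity, let $A\subseteq Y$ be $Q$-invariant mod $\nu$. Then $\Phi^{-1}(A)$ is $R$-invariant mod $\mu$, because $R^{-1}\Phi^{-1}(A)=(\Phi\circ R)^{-1}(A)=(Q\circ\Phi)^{-1}(A)=\Phi^{-1}(Q^{-1}A)$ differs from $\Phi^{-1}(A)$ only by a $\mu$-null set. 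Ergodicity of $\mu$ forces $\mu(\Phi^{-1}A)\in\{0,1\}$, and since $\nu(A)=\mu(\Phi^{-1}A)$ we conclude $\nu(A)\in\{0,1\}$. Note this argument uses only surjectivity and measurability of $\Phi$, not the fiber bound (\ref{card}).

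\emph{Part (a).} The inequality $h_{top}(Q)\le h_{top}(R)$ is the easy half: given $\varepsilon>0$, uniform continuity of $\Phi$ on the compact space $X$ yields $\delta>0$ with $\rho_X(x,x')<\delta\Rightarrow\rho_Y(\Phi x,\Phi x')<\varepsilon$; lifting each point of a $Q$-$(n,\varepsilon)$-separated set to a preimage under $\Phi$ and using $\Phi\circ R^k=Q^k\circ\Phi$ with the contrapositive of uniform continuity produces an $R$-$(n,\delta)$-separated set of the same cardinality, so the corresponding packing numbers satisfy $N_Q(n,\varepsilon)\le N_R(n,\delta)$. The reverse inequality $h_{top}(R)\le h_{top}(Q)$ is the crux and the only place where (\ref{card}) is genuinely needed. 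The engine I would isolate is a compactness lemma: \emph{for every $\varepsilon>0$ there exists $\delta>0$ such that any family of pairwise $\varepsilon$-separated points of $X$ whose $\Phi$-images lie in a common ball of radius $\delta$ has at most $C_R$ members}; this follows by contradiction, extracting a convergent subsequence of a hypothetical family of $C_R+1$ such points for $\delta\to 0$ and landing $C_R+1$ distinct points in a single fiber. Feeding this into Bowen's fiber-entropy inequality $h_{top}(R)\le h_{top}(Q)+\sup_{y\in Y}h_{top}\!\big(R,\Phi^{-1}(y)\big)$ and observing that a finite fiber $\Phi^{-1}(y)$, having at most $C_R$ points, is covered by at most $C_R$ dynamical $(n,\varepsilon)$-balls for every $n$, so that $h_{top}(R,\Phi^{-1}(y))=0$ uniformly, gives the claim. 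I expect this reverse bound to be the main obstacle: distinct sheets of $\Phi^{-1}$ may approach one another (for instance over the critical values of $\Phi$), so there is no uniform intra-fiber separation, and the possible spreading of preimages along orbits must be controlled, which is precisely the content of the covering estimate.

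\emph{Part (c).} By (a) we have $h_{top}(R)=h_{top}(Q)$; combined with the hypothesis $h_\mu(R)=h_\nu(Q)$ and the maximality $h_\mu(R)=h_{top}(R)$, this gives $h_\nu(Q)=h_{top}(Q)$, so $\nu$ is itself a measure of maximal entropy on $Y$. For uniqueness, let $\nu'$ be any invariant measure with $h_{\nu'}(Q)=h_{top}(Q)$. The set $\{\lambda\in\cP(X):\Phi_*\lambda=\nu'\}$ is nonempty, compact and convex, and is carried into itself by $R_*$, since $\Phi_*R_*\lambda=Q_*\Phi_*\lambda=Q_*\nu'=\nu'$; by the Markov--Kakutani fixed point theorem $R_*$ has a fixed point $\mu'$ there, i.e. an $R$-invariant lift of $\nu'$. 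The standard fact that a measure-theoretic factor cannot increase entropy gives $h_{\mu'}(R)\ge h_{\Phi_*\mu'}(Q)=h_{\nu'}(Q)=h_{top}(R)$, hence $h_{\mu'}(R)=h_{top}(R)$ and $\mu'$ is maximal on $X$. Uniqueness of the maximal measure on $X$ forces $\mu'=\mu$, and therefore $\nu'=\Phi_*\mu'=\Phi_*\mu=\nu$.
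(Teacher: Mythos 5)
Your proposal is correct, and it is genuinely more self-contained than the paper's own proof, which is deliberately a summary of citations: for (a) the authors simply invoke \cite[Theorem 4.1.15]{D}, for (b) \cite[Lemma 4.2]{FP}, and for (c) they combine the variational chain $h_\nu(Q)\leq h_{top}(Q)\leq h_{top}(R)=h_\mu(R)$ with the ergodic lifting lemma \cite[Lemma 4.9]{FP}, which lifts an ergodic $\sigma\neq\nu$ on $Y$ to an ergodic $\tau\neq\mu$ on $X$ and then concludes via $h_\sigma(Q)\leq h_\tau(R)<h_\mu(R)$. Your (b) is the same standard push-forward argument written out, and your observation that \eqref{card} is not needed there matches the paper's explicit remark. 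Your (a) replaces the citation by Bowen's fiber-entropy inequality $h_{top}(R)\leq h_{top}(Q)+\sup_{y}h_{top}\bigl(R,\Phi^{-1}(y)\bigr)$; since each fiber has at most $C_R$ points, its Bowen entropy vanishes, and the argument closes --- note that your preliminary compactness lemma is then redundant, as no uniform intra-fiber separation is needed once the inequality is invoked. Your (c) takes a genuinely different route: instead of lifting ergodic measures via the Forn\ae ss--Peters lemma (whose proof uses the fiber bound, which is why the paper needs Remark \ref{remark1} to cope with a single infinite fiber), you lift an arbitrary invariant $\nu'$ by Markov--Kakutani applied to the compact convex set of lifts (Krylov--Bogolyubov averaging would also do); this needs no fiber condition whatsoever, a real gain in generality that would make Remark \ref{remark1} automatic for part (c). The one point to patch: your lift $\mu'$ need not be ergodic, so to conclude $\mu'=\mu$ from uniqueness of the measure of maximal entropy you should either read uniqueness among all invariant measures (true for $\mu_R$) or add one sentence of ergodic decomposition --- entropy is affine over the decomposition, so almost every ergodic component of $\mu'$ has entropy $h_{top}(R)$ and hence equals $\mu$, forcing $\mu'=\mu$. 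With that sentence inserted, your argument is complete and slightly stronger than the paper's.
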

\noindent 
\begin{proof} This proposition is a summary of known results which were also proven in \cite{FP}  in a slightly less general form. Therefore we will only sketch the proof.  

 (a) In general, the semi-conjugacy of $R$ and $Q$ implies only $h_{top}(R)\geq h_{top}(Q)$. However, in our case the two topological entropies agree due to condition (\ref{card}), e.g. see \cite[Theorem 4.1.15]{D}.  
 
 (b) This statement is proved in  \cite[Lemma 4.2.]{FP}. Note that it remains valid even in the case of unbounded fibers. That is, we do not need condition (\ref{card}) to prove it.
  
  (c) As before semi-conjugacy of $R$ and $Q$ implies  $h_{top}(R)\geq h_{top}(Q)$. Combining  (b) with the variational principle we obtain 
\begin{equation}\label{entropy}
h_\nu(Q)\leq \sup_\lambda h_{\lambda}(Q)\leq h_{top}(Q)\leq h_{top}(R)=h_\mu(R),
\end{equation}
where the supremum is taken over all invariant ergodic measures on the set $Y$,  hence in every semi-conjugate system  we have $h_{\mu}(R)\geq h_{\nu}(Q)$. Suppose that $h_{\mu}(R)=h_{\nu}(Q)$, then it follows immediately from \eqref{entropy} that $\nu$ is the measure of maximal entropy on $Y$. To prove the uniqueness let  $\sigma\neq \nu$ be any other invariant ergodic probability measure on $Y$. By \cite[Lemma 4.9]{FP} the measure $\sigma$ is the push forward of an invariant ergodic probability measure $\tau$ on $X$. Since $\sigma\neq \nu$ and since $\mu$ is unique measure of maximal entropy we have $\tau \neq\mu$ and $h_\tau(R)<h_{\mu}(R)$. Since $\sigma$ is a push forward of $\tau$ it  follows that 
$h_{\sigma}(Q)\leq h_\tau(R)$, hence $h_{\sigma}(Q)<h_{\nu}(Q)$ which completes the proof of uniqueness.

\end{proof}
\begin{remark}\label{remark1} In what follows, we sometimes consider maps $\Phi$ for which (\ref{card}) is satisfied everywhere but in a single infinite fiber $\Phi^{-1}(y_0)$. However, note that the claim  $(c)$ from Proposition \ref{proposition1} can still be applied. Indeed, one can check that \cite[Lemma 4.9]{FP} remains valid if the fibers $\Phi^{-1}(y)$ satisfy the condition (\ref{card}) for $\nu$-almost every $y\in Y$. These details are left to the reader.
\end{remark}

\subsection{Proof of Lemma \ref{reduction2}}

 The map $z\rightarrow d_{FS}(0,z)$ is real analytic on $\hC$, hence given   $n\in\mathbb{N}_0$ the map $\Phi_n:\hC\times\hC\rightarrow \R$ defined as
$$
\Phi_n(z,w)=d_{FS}(0,R^n(z))-d_{FS}(0,R^n(w)).
$$ 
is real analytic as well. Furthermore, we define 
$$
Z_n=\{(z,w)\in \hC\times\hC\mid \Phi_0=\Phi_1=\ldots=\Phi_n=0 \}.
$$
Note that $Z_{n}\supset Z_{n+1}$ for all $n$. By  \cite[Theorem I.9.]{F} the ring of global real analytic functions on a compact real analytic manifold is noetherian (see also the last paragraph in \cite{R}). Hence, there exists $N\in\mathbb{N}_0$ such that $Z_{n}= Z_{n+1}$ for all $n\geq N$.

\section{Mirrored points}
In this section we investigate the fibers of $\Phi$ and the corresponding data reduction. Let us begin with the following elementary example.  

\begin{example} Let $R(z)=z^{d}$ where $d\geq 2$. Then $|z|=|w|$ implies $|R(z)|=|R(w)|$ therefore $N=0$ and $Q\colon [0,1]\to[0,1]$ can be computed explicitly
$$Q(x)=\frac{2}{\pi}\arctan\left(\tan\left(\frac{\pi x}{2}\right)\right)^{d}.$$ 
Since $Q$ is a homeomorphism of the unit interval we have $h_{top}(Q)=0$. In contrast, we know from \cite{Lyu, Mane} that $h_{top}(R)=\log d$. $\clubsuit$ 
\end{example}

\subsection{Exceptional maps} As seen above, map $\Phi$ compresses the behavior along the prime circles. Hence, in order to preserve its entropy, we have to exclude cases in which the loss of information would be too large. That is, in order to apply Proposition \ref{proposition1}, we have to assure that all fibers $\Phi^{-1}(y)$, $y\in \cS$ (except at most one) are finite and have their cardinality uniformly bounded.

The following lemma shows that this is true for all $R$ that omit the special case presented in the above example. The proof is based on the fact that the only rational maps $R\colon \hC\to\hC$ satisfying $R(\partial\mathbb{D})\subset\partial\mathbb{D}$ are the \emph{finite Blaschke products} \begin{equation}\label{form}
R(z)=e^{i\varphi}z^{\ell}\prod^{n}_{k=1}\frac{z-a_k}{1-\bar{a}_kz},
\end{equation}
where $a_k\in \mathbb{C}^*\backslash \partial\D$, $\ell\in \mathbb{Z}$ and $\varphi\in\mathbb{R}$.

\begin{lemma}\label{R-invariant} Let $R$ be a rational map of degree $d\geq 2$. If $R$ maps two distinct prime circles into prime circles, it is of the form $R(z)=az^{\pm d}$, $a\in\mathbb{C}^*$.  
\end{lemma}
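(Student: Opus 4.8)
The plan is to convert each hypothesis that $R$ carries one circle into another into a rigid functional equation, via the inversion symmetry of finite Blaschke products, and then to combine the two equations coming from the two distinct circles into a scaling relation that pins down $R$ as a monomial. First I would reduce to non-degenerate circles: since $R$ is non-constant it is never constant on a circle, so it cannot map a circle $\partial\D_r$ into the degenerate prime circles $\{0\}$ or $\{\infty\}$, while a degenerate prime circle is a single point and is automatically sent into some prime circle, imposing no condition. Thus I may assume the two given circles are $\partial\D_{r_1}$ and $\partial\D_{r_2}$ with $0<r_1\neq r_2<\infty$ and $R(\partial\D_{r_i})\subset\partial\D_{s_i}$ for some $s_i\in(0,\infty)$; in particular $R$ has no pole on either circle.

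Next, for each $i$ the rescaled map $B_i(z):=s_i^{-1}R(r_iz)$ satisfies $B_i(\partial\D)\subset\partial\D$, so by \eqref{form} it is a finite Blaschke product. Every finite Blaschke product obeys the inversion symmetry $B_i(1/\bar z)\,\overline{B_i(z)}=1$, which after undoing the rescaling reads $R(r_i^2/\bar z)\,\overline{R(z)}=s_i^2$. Writing $R^*(z):=\overline{R(\bar z)}$ for the rational map obtained by conjugating the coefficients and taking complex conjugates, this turns into the holomorphic identity $R^*(r_i^2/z)\,R(z)=s_i^2$, valid for all $z$ by analytic continuation.

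Finally, dividing the identity for $i=1$ by the one for $i=2$ cancels the factor $R(z)$ and, after the substitution $u=r_2^2/z$, yields the scaling relation $R^*(cu)=\lambda R^*(u)$ with $c=r_1^2/r_2^2\neq1$ and $\lambda=s_1^2/s_2^2>0$. If $R^*$ had a zero or a pole at some $w_0\notin\{0,\infty\}$, this relation would force $c^nw_0$ to be a zero, respectively a pole, for every $n\in\Z$, producing infinitely many distinct ones, which is impossible for a rational map. Hence all zeros and poles of $R^*$ lie at $0$ and $\infty$, so $R^*(z)=\alpha z^m$ for some $\alpha\in\C^*$ and $m\in\Z$, and therefore $R(z)=\bar\alpha z^m$. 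Since $\deg R=|m|=d$, we conclude $R(z)=\bar\alpha z^{\pm d}$, as claimed.

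The step I expect to be the main obstacle is the passage from the local reflection statement to the global identity $R^*(r_i^2/z)\,R(z)=s_i^2$: one must verify that, because $R$ is holomorphic across $\partial\D_{r_i}$ (it has no poles there) and maps it into the genuine circle $\partial\D_{s_i}$, the reflected function agrees with $R$ itself, so that the relation holds as an identity of rational functions rather than merely in a neighborhood of the circle. Once this is in place, the remaining work --- the division, the orbit argument, and the monomial characterization --- is purely formal.
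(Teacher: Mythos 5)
Your proof is correct, but it takes a genuinely different route from the paper's. The paper normalizes so that $r_1=r_2=1$, invokes the classification \eqref{form} twice --- once for $R$ and once for the rescaled map $\frac{1}{s_2}R(s_1 z)$ --- and then matches the zeros and poles of the two Blaschke factorizations: comparing zeros forces $a_k=b_k s_1$ (after reordering), comparing poles forces $b_k=a_k s_1$, whence $s_1^2=1$, a contradiction unless there are no Blaschke factors at all, i.e.\ $R(z)=e^{i\varphi}z^{\pm d}$. You instead encode each circle condition in the reflection identity $R^*(r_i^2/z)\,R(z)=s_i^2$ and in effect compose the two reflections: dividing the two identities produces the dilation equation $R^*(cu)=\lambda R^*(u)$ with $c=r_1^2/r_2^2\neq 1$, and the infinite orbit $\{c^n w_0\}_{n\in\Z}$ of any zero or pole $w_0\in\C^*$ rules out everything except a monomial. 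Your route buys a few things: it is symmetric in the two circles (no WLOG rescaling or factor-reordering bookkeeping), and it does not actually need the full classification \eqref{form} --- the reflection identity follows directly from the identity theorem, since both sides are rational and agree on $\partial\D_{r_i}$, where $r_i^2/z=\bar z$ and $|R(z)|=s_i$. This same observation disposes of the step you flagged as the main obstacle: two rational (hence globally meromorphic) functions agreeing on a set with an accumulation point agree identically, so no separate discussion of holomorphy across the circle is required. Conversely, the paper's proof is shorter once the Blaschke classification is granted; and your preliminary reduction (non-constancy of $R$ on a circle rules out degenerate image circles, while a degenerate source circle imposes no condition) makes explicit an edge case that the paper handles implicitly by assuming all four radii positive and finite.
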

\begin{proof}
Assume that $R$ maps prime circles of radii $s_1>r_1>0$ into prime circles with radii $s_2>0$ and $r_2>0$ respectively. Without loss of generality we can assume that $r_1=r_2=1$. That is, $R$ is of the form 
$$R(z)=e^{i\varphi}z^{\ell}\prod^{n}_{k=1}\frac{z-a_k}{1-\bar{a}_kz},$$
for some $a_k\in \mathbb{C}^*\backslash \partial\D$, $\ell\in \mathbb{Z}$ and $\varphi\in\mathbb{R}$. Furthermore, the same is true for  
$$\frac{1}{s_2}R(s_1\cdot z)=e^{i\theta}z^{\tilde{\ell}}\prod^{m}_{k=1}\frac{z-b_k}{1-\bar{b}_kz}.$$
However, $\ell=\tilde{\ell}$ and $n=m$ since the function $R$ was only rescaled. Thus
$$\frac{1}{s_2}R(s_1\cdot z)=e^{i\theta}z^{^\ell}\prod^{n}_{k=1}\frac{z-b_k}{1-\bar{b}_kz}=\frac{1}{s_2}e^{i\varphi}(s_1z)^{^\ell}\prod^{n}_{k=1}\frac{s_1z-a_k}{1-\bar{a}_ks_1z}.$$
 
We divide this relation with $z^\ell$ and observe its zeroes. It is evident that after reordering the products one can assume that $a_k=b_ks_1$. Furthermore, comparing the poles one sees that $s_1=1$. This yields contradiction unless $n=0$.
\end{proof}

The lemma and the example above indicate that the case $R(z)=az^{\pm d}$ is very special and has to be treated separately. Moreover, in the spirit of the Remark \ref{remark1}, we pay a special attention to the following maps as well.
  
\begin{definition}
A rational map $R$ is \textit{exceptional} if $R(\D_r)\subset \D_r$ for some $r>0$, i.e.  $R(rz)/r$ is a finite Blaschke product.  
\end{definition}
\noindent The following proposition establishes the bound (\ref{card}) from \S 2.1. As pointed out above this bound is violated in precisely one infinite fiber when $R(z)\neq az^{\pm d}$ is exceptional. We introduce some additional terminology. We say that $z\in \hC$ is \emph{mirrored} by $w\in\hC$ if $\Phi(z)=\Phi(w)$ and $w\neq z$, that is, they belong to the same fiber. The set of all such $w\neq z$ is called the \emph{mirrors of} $z$. Finally, $z$ \emph{is mirrored} if it has at least one mirror. 

\begin{proposition}\label{cardinality}
Let $R$ be a rational map of degree $d\geq 2$. The following holds:
\begin{itemize} 
\item[a)] If $R$ is non-exceptional, then $\#\Phi^{-1}(y)\leq 2d^2$ for every $y\in\cS$. 
\item[b)] If $R$ is exceptional and $R(z)\neq az^{\pm d}$, there exists a unique $y_0\in\cS$ such that the fiber $\Phi^{-1}(y_0)$ is infinite and that $\#\Phi^{-1}(y)\leq 2d$ for every $y\in\cS\setminus\left\{y_0\right\}$. 
\end{itemize}
\end{proposition}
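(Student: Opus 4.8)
The plan is to reduce the statement to counting intersections of a prime circle with a preimage of a prime circle, organized around the unique ``special'' radius supplied by Lemma \ref{R-invariant}. First I would pin down the fibers precisely. For $z$ with $\Phi(z)=y$ set $r_n:=|R^n(z)|$; since $d_{FS}(0,a)=d_{FS}(0,b)$ iff $|a|=|b|$, Lemma \ref{reduction2} gives
$$\Phi^{-1}(y)=\{z\in\hC : |R^n(z)|=r_n \text{ for all } n\in\mathbb{N}_0\}.$$
Thus the first coordinate confines $\Phi^{-1}(y)$ to the prime circle $\partial\D_{r_0}$, and --- crucially --- I may invoke the constraint $|R^n(z)|=r_n$ at \emph{any} level $n$, even beyond $N$, when bounding the fiber. (Degenerate radii $r_0\in\{0,\infty\}$ give singleton fibers and are harmless.)

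The elementary engine is a count on a single circle. Writing $R=p/q$ with coprime polynomials and $\deg R=d$, and using $\bar z=r^2/z$ on $\partial\D_r$, the equation $|R(z)|^2=s^2$ becomes the polynomial identity
$$p(z)\,p^*(z)-s^2\,q(z)\,q^*(z)=0, \qquad p^*(z):=z^d\,\overline{p}(r^2/z),\quad q^*(z):=z^d\,\overline{q}(r^2/z),$$
of degree at most $2d$. On $\partial\D_r$ its left side equals $z^d(|p(z)|^2-s^2|q(z)|^2)$, so it vanishes identically iff $|R|\equiv s$ on $\partial\D_r$; otherwise it has at most $2d$ roots and $\{z\in\partial\D_r : |R(z)|=s\}$ is finite of size $\le 2d$. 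Call $r$ \emph{special} if $|R|$ is constant on $\partial\D_r$, i.e. $R$ maps $\partial\D_r$ into a prime circle. By Lemma \ref{R-invariant}, any $R\ne az^{\pm d}$ has at most one special radius $r^*$.

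The two cases are then bookkeeping. Since $az^{\pm d}$ maps a suitable prime circle onto itself it is exceptional; hence a non-exceptional $R$ satisfies $R\ne az^{\pm d}$ and has at most one special radius $r^*$. If $r^*$ exists then $|R|\equiv r_1$ on $\partial\D_{r^*}$ for some $r_1$; a non-constant rational map cannot collapse a circle to a point, so $0<r_1<\infty$, and $r_1\ne r^*$ (equality would make $R$ exceptional). For $r_0\ne r^*$ the constraint at $n=1$ is effective and yields $\#\Phi^{-1}(y)\le 2d$; for $r_0=r^*$ it is vacuous, so I pass to $n=2$: as $r_1$ is non-special, $S:=\{w\in\partial\D_{r_1} : |R(w)|=r_2\}$ has at most $2d$ points, and $\Phi^{-1}(y)\subseteq R^{-1}(S)$ has at most $d\cdot 2d=2d^2$ points, proving (a). For (b), exceptionality gives a special $r^*$ with $R(\partial\D_{r^*})\subseteq\partial\D_{r^*}$, unique since $R\ne az^{\pm d}$. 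Every $z\in\partial\D_{r^*}$ then has the constant modulus sequence $(r^*,r^*,\dots)$, so $\Phi\equiv y_0$ there and $\Phi^{-1}(y_0)=\partial\D_{r^*}$ is infinite; any other fiber lies on a non-special circle $\partial\D_{r_0}$, $r_0\ne r^*$, and the $n=1$ bound gives $\le 2d$, so $y_0$ is the unique infinite fiber.

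The main obstacle is the sub-case $r_0=r^*$ of (a): on the special circle the $n=1$ constraint carries no information, and one must step the orbit forward once more and control the (at most $d$) $R$-preimages of the finite set cut out on the image circle --- this is exactly where the quadratic bound $2d^2$ appears instead of $2d$. The supporting subtlety is ensuring that $R\ne az^{\pm d}$ genuinely forbids a second special radius, so that the image radius $r_1$ is non-special; this is precisely the role of Lemma \ref{R-invariant}.
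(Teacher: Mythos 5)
Your proposal is correct and follows essentially the same route as the paper: the same degree-$2d$ counting device (your polynomial $p\,p^*-s^2q\,q^*$ is the paper's rational map $R(z)\overline{R}(r^2/z)-s^2$ cleared of denominators), Lemma \ref{R-invariant} invoked in the same way to exclude a second special circle, and the same two-step count $2d\cdot d=2d^2$ when the fiber lies on the special circle. The only differences are organizational --- you phrase things fiber-by-fiber via a ``special radius'' dichotomy and dispose of the degenerate radii $r_n\in\{0,\infty\}$ in passing (zeros/poles give at most $d$ points, so this is indeed harmless), whereas the paper counts mirrors of a point $z$ and treats orbits through $\{0,\infty\}$ separately.
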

\begin{proof} First note that the following statement holds: If $r_1>0$ and $r_2>0$ are distinct and such that $R(\partial\D_{r_1})\not\subset \partial\D_{r_2}$, then the number of points in $\partial\D_{r_1}$ that are mapped into $\partial\D_{r_2}$ is at most $2d$. Indeed, let $\overline{R}$ be the rational map obtained from $R$ by conjugating its coefficients. Since $R(\partial\D_{r_1})\not\subset \partial\D_{r_2}$ the map 
$$z\to R(z)\overline{R}(r_1^2/z)-r_2^2$$
does not vanish identically on $\partial\D_{r_1}.$ Hence, we may extend it as a $2d$-degree rational map that admits at most $2d$ distinct zeroes on $\hat{\mathbb{C}}$. 

Let $R$ be non-exceptional. Note that $z=0$ and $z=\infty$ can not be mirrored. Hence $z\in R^{-1}(\left\{0,\infty\right\})$ admits at most $d-1$ mirrors and $z\in R^{-2}(\left\{0,\infty\right\})$ admits at most $d^2-1$ mirrors. Next, assume that $\left\{z,R(z),R^2(z)\right\}\cap\left\{0,\infty\right\}=\emptyset$ and suppose that $r_0,r_1,r_2>0$ are the radii of prime circles through $z$, $R(z)$ and $R^2(z)$ respectively. If $R(\partial\D_{r_0})\not\subset \partial\D_{r_1}$ then there are at most $2d$ points in $\partial\D_{r_0}$ mapped to $\partial\D_{r_1}$. Thus, $z$ can be mirrored by at most $2d-1$ points. In contrast, if $R(\partial\D_{r_0})\subset \partial\D_{r_1}$ we know that $\partial\D_{r_0}\neq \partial\D_{r_1}$ since there is no $R$-invariant prime circle. Moreover, $R(\partial\D_{r_1})\not\subset \partial\D_{r_2}$ since two prime circles can not be carried into prime circles. Hence at most $2d$ points in $\partial\D_{r_1}$ are mapped into $\partial\D_{r_2}$. Each of them has at most $d$ preimages in $\partial\D_{r_0}$, so there are at most $2d^2-1$ points that mirror $z$.

Let $\partial\D_{r_0}$ be the unique invariant prime circle of an exceptional map $R$. Suppose that $z\in\C^*\backslash \partial\D_{r_0}$ and let $\partial\D_{r_1}$ and $\partial\D_{r_2}$ be the prime circles through $z$ and $R(z)$ respectively. Clearly $\partial\D_{r_0}\neq \partial\D_{r_1}$ and since $R(z)\neq az^{\pm d}$ we also have $R(\partial\D_{r_1}) \not\subset \partial\D_{r_2}$. Hence, there are at most $2d$ points in $\partial\D_{r_1}$ that are mapped to the circle $\partial\D_{r_2}$. 
\end{proof}

In the example above we show that if $R(z)=az^{\pm d}$, one has $h_{top}(Q)=0$. This is due to the fact that the Julia set of such $R$ is contained in a prime circle that is compressed by $\Phi$. Our next aim is to prove that this is true for any map $R$ that satisfies $\cJ\subseteq\partial\mathbb{D}_r$ for some $r>0$. Since $\cJ$ is $R$-invariant and uncountable, it follows by Proposition \ref{cardinality} that in this case we have $R(\partial\D_r)\subseteq \partial\D_r$. Therefore all such maps are exceptional.   

\begin{definition} A rational map $R$ is \textit{strongly exceptional} if $\cJ\subseteq\partial \mathbb{D}_r$ for some $r>0$.
\end{definition}
\noindent Note that such maps were classified in \cite{EvS}. We use this classification in order to prove the following proposition. 

\begin{proposition}\label{zero}If $R$ is a strongly exceptional rational map, we have $h_{top}(Q)=0$.
\end{proposition}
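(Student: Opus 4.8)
I want to show that a strongly exceptional map $R$ (so $\cJ \subseteq \partial\D_r$ for some $r>0$) yields a reduced system $(\cS,Q)$ with $h_{top}(Q)=0$. The strategy is to exploit the classification of such maps referenced in \cite{EvS}, combined with the observation already made in the text: since $\cJ$ is $R$-invariant, uncountable, and contained in the prime circle $\partial\D_r$, Proposition \ref{cardinality} forces $R(\partial\D_r)\subseteq\partial\D_r$, so $R$ is exceptional and $\partial\D_r$ is the invariant prime circle. After the harmless rescaling $z\mapsto z/r$ I may assume $r=1$, so that $R$ is a finite Blaschke product as in \eqref{form} whose Julia set lies on $\partial\D$.

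The key structural point is that a Blaschke product with Julia set on $\partial\D$ is, up to conjugation, one of a very short list: these are exactly the maps conjugate to $z\mapsto z^{\pm d}$ (when the dynamics on the circle is the full expanding map), or Blaschke products for which the circle $\partial\D$ is a genuine Julia set but whose action on the circle is topologically conjugate to an angle-$d$-fold covering. In every case the essential fact I will extract from \cite{EvS} is that the circle dynamics is smoothly conjugate — or at least conjugate in a way respecting the modulus — to $z\mapsto z^{d}$ on $\partial\D$, and consequently the Julia set is one full prime circle that $\Phi$ collapses to a single point.

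Granting that, the proof of $h_{top}(Q)=0$ proceeds as follows. All the interesting (positive-entropy) dynamics of $R$ takes place on $\cJ\subseteq\partial\D_r$, which is a single prime circle $\Omega_\rho$, and $\Phi$ sends this entire circle to one point $y_*=\Phi(\cJ)\in\cS$. Off the Julia set the dynamics is non-chaotic: every orbit in the Fatou set converges to an attracting or parabolic cycle, and there are no wandering Fatou components (by Sullivan). I would argue that the push-forward system $(\cS,Q)$ inherits essentially the Fatou-side behavior only, because the whole Julia circle is crushed to $y_*$. Concretely, I first note $Q(y_*)=y_*$, and then I show the $Q$-orbit of $\Phi(z_0)$ of any point $z_0$ is governed by the radial/modulus data $|z_n|=|R^n(z_0)|$, which for an exceptional map is controlled by the one-dimensional dynamics of the induced map on radii (the analogue of $Q(x)=\frac{2}{\pi}\arctan(\tan(\frac{\pi x}{2}))^{d}$ in the Example). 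The plan is to bound $N(n,\varepsilon)$ for $Q$ by the number of distinct modulus-profiles realizable in time $n$, and to show this grows subexponentially because all nearby orbits either converge to the same attracting behavior or shadow the collapsed invariant circle.

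\textbf{The main obstacle.} The delicate step is handling the possibility that the Julia-set circle carries expanding dynamics (as in $z\mapsto z^d$) while still establishing $h_{top}(Q)=0$: here the ambient map $R$ has $h_{top}(R)=\log d>0$, and I must verify that \emph{after} the collapse $\Phi|_{\cJ}\equiv y_*$ the reduced map genuinely loses this entropy. The Example does this by hand for $z^{\pm d}$ via an explicit homeomorphism of $[0,1]$, and the real content is to push that computation through the full classification in \cite{EvS} — i.e.\ to confirm that for \emph{every} strongly exceptional $R$ the radial dynamics reduces to a map of an interval (or a finite union of intervals with a single collapsed fixed value), for which topological entropy of a monotone interval map is zero. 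I expect the cleanest route is: reduce to the invariant-circle normal form, observe that $\Phi$ factors through the modulus $z\mapsto|z|$, identify the induced map on moduli with a piecewise-monotone self-map of $[0,\infty]$ having the single neutral value $r$ corresponding to $\cJ$, and invoke that such a system is semiconjugate to a monotone interval map and hence has zero topological entropy.
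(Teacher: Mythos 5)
There is a genuine gap at the heart of your argument: the proposed reduction of $(\cS,Q)$ to a one-dimensional ``radial'' system does not exist. For a strongly exceptional map with $R(z)\neq az^{\pm d}$, the modulus $|R(z)|$ is \emph{not} a function of $|z|$: by Lemma \ref{R-invariant} only the single invariant prime circle is carried into a prime circle, so any other prime circle $\partial\D_s$, $s\neq r$, is spread by $R$ over a whole range of moduli. This is precisely why one needs $N\geq 1$ and $\cS\subset[0,1]^{N+1}$ rather than $\cS\subset[0,1]$. Consequently there is no induced self-map on moduli, piecewise monotone or otherwise, and your final step --- ``identify the induced map on moduli with a piecewise-monotone self-map of $[0,\infty]$ \dots hence zero entropy'' --- cannot be carried out. (Even granting such a factor map, a semiconjugacy of $(\cS,Q)$ \emph{onto} a monotone interval map only bounds the factor's entropy above by $h_{top}(Q)$, which is the wrong direction; to conclude $h_{top}(Q)=0$ you would additionally need the factor map to have uniformly bounded fibers, which is again not available.) Your structural reading of the classification is also inaccurate: by \cite[Theorem 2]{EvS} the second class of strongly exceptional maps has a fixed point $w_0\in\partial\D$ with multiplier in $[-1,1]$ and $\cJ$ contained in a closed arc, so the circle dynamics is not conjugate to $z\mapsto z^{d}$ and $\cJ$ is not a full prime circle (in the first class it can be a Cantor subset of $\partial\D$); fortunately $\Phi$ still collapses $\cJ$ to one point simply because $\cJ\subseteq\partial\D_r$.

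The dynamical fact you do correctly isolate --- that off the invariant circle every orbit converges, to one of at most two Denjoy--Wolff points in the first class of \cite{EvS} and to $w_0$ in the second --- is exactly the ingredient the paper uses, but the paper converts it into $h_{top}(Q)=0$ by a device that avoids all $(n,\varepsilon)$-ball counting: by Downarowicz's theorem \cite{Dow}, $h_{top}(Q)>0$ would force an uncountable set $E$ such that every pair $x,y\in E$ has time-averaged distances with liminf equal to $0$ and limsup positive. If two orbits both converge, the averaged distance converges, so liminf equals limsup; hence any such pair upstairs must involve a point of $\cJ$, and since $\Phi(\cJ)$ is a single point of $\cS$, at most two points of $\cS$ can participate in such a pair, giving $\#E\leq 2$, a contradiction. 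If you insist on salvaging your counting strategy, the missing ingredient is a uniform (in the initial point) control of the transient behavior of orbits passing near the collapsed circle, which is nontrivial precisely because the circle dynamics can be expanding; the chaos-theoretic route sidesteps this entirely.
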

\begin{proof} Our proof is based on the result from \cite{Dow}: Given a continuous endomorphism $F$ defined on a compact metric space $(X,d)$ and satisfying $h_{top}(F)>0$ there exists an uncountable set $E\subset X$ such that for any $x, y\in E$ we have 
$$
\liminf_{n\rightarrow \infty}\frac{1}{n}\sum_{k=1}^nd(F^k(x),F^k(y))=0\quad\text{and}\quad \limsup_{n\rightarrow \infty}\frac{1}{n}\sum_{k=1}^nd(F^k(x),F^k(y))>0.
$$
Let us try to find such set $E$ for our map $F=Q$ and the space $X=\cS$ equipped with the usual Euclidean metric. 

According to \cite[Theorem 2]{EvS} we have to consider two cases. Firstly, a strongly exceptional map $R$ can be represented as a finite Blaschke product with all the zeros belonging to either $\D$ or $\hC\setminus\overline{\D}$. In this case, $\cJ$ is equal to $\partial\D$ or to a Cantor subset of $\partial\D$. However, in both cases for $z\in\hC\setminus \partial \D$ the iterates $R^n(z)$ approach one of at most two points given by Denjoy-Wolff Theorem. 

In contrast, all other strongly exceptional maps $R$ satisfy the following two conditions: $R$ admits a fixed point $w_0\in\partial\D$ whose multiplier belongs to $[-1,1]$; the Julia set $\cJ$ is contained in a closed arc $I\subset \partial\D$ whose interior does not contain $w_0$. However, for all such maps we have $R^n(z)\to w_0$ where $z\in\hC\setminus \partial \D$.

The above consideration implies that the condition
$$\liminf_{n\rightarrow \infty}\frac{1}{n}\sum_{k=1}^nd_{FS}(R^k(z),R^k(w))\neq\limsup_{n\rightarrow \infty}\frac{1}{n}\sum_{k=1}^nd_{FS}(R^k(z),R^k(w))$$
can be fulfilled only if at least one of the points $z$ and $w$ is contained in $\cJ$. However, the map $\Phi$ compresses $\cJ$ into a single point. Hence the condition   
$$\liminf_{n\rightarrow \infty}\frac{1}{n}\sum_{k=1}^n||Q^k(x)-Q^k(y)||\neq\limsup_{n\rightarrow \infty}\frac{1}{n}\sum_{k=1}^n||Q^k(x)-Q^k(y)||$$
is satisfied for at most two points $x,y\in \cS$. That is, $\#E\leq 2$ and $h_{top}(Q)=0$. 
\end{proof}

\subsection{Mirrored set}

As seen above, the mirrored points play important role in the analysis of our reduced dynamical system. Therefore we define the following set 
$$
M:=\{z\in\hC\mid \exists w\neq z: \Phi(w)=\Phi(z) \}.
$$ 
We call it {\it the mirrored set}. Let us prove that it is never empty. 

\begin{lemma}\label{injective} Let $R$ be a rational map of degree $d\geq 2$. Given its critical point $c\in\hC$ there exist distinct points $z,w\in \hC$ arbitrarily close to $c$ and such that $\Phi(z)=\Phi(w)$.
\end{lemma}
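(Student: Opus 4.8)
The plan is to exploit the only feature a critical point is guaranteed to have, namely the failure of local injectivity, and to notice that two points with a common $R$-image are ``for free'' synchronized in most of the coordinates of $\Phi$. Recall from the discussion preceding Lemma~\ref{reduction2} that $d_{FS}(0,z)=d_{FS}(0,w)$ holds exactly when $|z|=|w|$, so that
$$\Phi(z)=\Phi(w)\iff |R^n(z)|=|R^n(w)| \text{ for all } 0\le n\le N.$$
If I can arrange $R(z)=R(w)$, then $R^n(z)=R^n(w)$ for every $n\ge 1$, so every condition at levels $n=1,\dots,N$ is automatic, and the single remaining requirement is $|z|=|w|$ at level $n=0$. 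Thus the whole statement reduces to producing, arbitrarily close to $c$, two distinct points in the same fiber of $R$ that also lie on a common prime circle.

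Next I would pass to the local normal form of $R$ at $c$: there is a local coordinate $\zeta=\psi(w)$ with $\psi(c)=0$ in which $R$ becomes $\zeta\mapsto\zeta^{k}$ (after a biholomorphic change of coordinate on the target as well), where $k\ge 2$ is the local degree of $R$ at the critical point. In this coordinate the $k$ preimages near $c$ of a value near $R(c)$ are precisely $\psi^{-1}(\zeta_0),\psi^{-1}(\omega\zeta_0),\dots,\psi^{-1}(\omega^{k-1}\zeta_0)$, where $\omega=e^{2\pi i/k}$ and $\zeta_0$ traces a small circle $|\zeta|=\rho$. I then introduce the continuous (in fact real-analytic) function
$$g(\zeta)=d_{FS}\bigl(0,\psi^{-1}(\zeta)\bigr),$$
which is defined on a full neighbourhood of $0$ and is insensitive to whether $c$ or $R(c)$ happen to equal $0$ or $\infty$; this is exactly why I phrase the modulus comparison through $d_{FS}$ rather than through $|\cdot|$.

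The core is a mean-value observation on the circle $|\zeta|=\rho$. Setting
$$h(\theta)=g\bigl(\rho e^{i\theta}\bigr)-g\bigl(\rho e^{i(\theta+2\pi/k)}\bigr),$$
the $2\pi$-periodicity of $\theta\mapsto g(\rho e^{i\theta})$ forces $\int_0^{2\pi}h(\theta)\,d\theta=0$, so $h$ either vanishes identically or changes sign; in either case the intermediate value theorem gives $\theta^\ast$ with $h(\theta^\ast)=0$. Writing $\zeta_0=\rho e^{i\theta^\ast}$ and $\zeta_1=\rho e^{i(\theta^\ast+2\pi/k)}$, the points $z=\psi^{-1}(\zeta_0)$ and $w=\psi^{-1}(\zeta_1)$ are distinct (since $k\ge 2$), lie arbitrarily close to $c$ once $\rho$ is small, satisfy $R(z)=R(w)$ because $\zeta_0$ and $\zeta_1$ are $k$-th roots of a common value, and obey $d_{FS}(0,z)=d_{FS}(0,w)$ by the choice of $\theta^\ast$. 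Hence $\Phi(z)=\Phi(w)$, as desired.

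The only genuinely delicate point is the reduction in the first paragraph: one must be certain that equality of the two $\Phi$-vectors really does follow from $R(z)=R(w)$ together with $|z|=|w|$, i.e.\ that level $n=0$ is the sole obstruction. The remaining ingredients --- the local normal form and the averaging/intermediate-value argument --- are routine. I would also note that the construction produces mirrored points at \emph{every} critical point, which is precisely what guarantees that the mirrored set $M$ is non-empty and feeds into the later analysis of its dimension.
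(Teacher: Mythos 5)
Your proof is correct, and while it shares the paper's overall skeleton, the key step is executed by a genuinely different route. Both arguments begin with the same reduction --- the point you flag as delicate is in fact immediate and is precisely the paper's opening sentence: if $R(z)=R(w)$ then every coordinate of $\Phi$ beyond the zeroth agrees automatically, so it suffices to produce $z\neq w$ near $c$ with $R(z)=R(w)$ and $|z|=|w|$ --- and both then exploit the local degree $k\geq 2$ at $c$ via an intermediate-value argument. But where the paper treats $R$ near $c$ as a small perturbation of the model $v+a_k(z-c)^k$, takes the preimage curve $K_\epsilon$ of a small circle around $v$, locates the two crossings of $K_\epsilon$ with the circle $|z|=|c|$, arranges (after perturbing $\theta$) that these lie in two distinct arcs $J_{j_1},J_{j_2}$, and runs a continuity argument on those arcs --- and must then redo the construction separately for $c\in\{0,\infty\}$, where the circle $|z|=|c|$ degenerates --- you pass to the exact local normal form $\zeta\mapsto\zeta^k$ and apply the zero-mean trick: $h(\theta)=g(\rho e^{i\theta})-g\bigl(\rho e^{i(\theta+2\pi/k)}\bigr)$ integrates to $0$ over $[0,2\pi]$, so the intermediate value theorem yields $\theta^\ast$ with $h(\theta^\ast)=0$, pairing a preimage with its rotation by $2\pi/k$. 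This buys three things over the paper's version: no perturbation bookkeeping (the normal form is exact), no genericity adjustment of $\theta$, and a uniform treatment of $c\in\{0,\infty\}$, since $g(\zeta)=d_{FS}(0,\psi^{-1}(\zeta))$ is defined and continuous on all of $\hC$. One tiny caveat: $g$ is only continuous, not real-analytic, where $\psi^{-1}(\zeta)\in\{0,\infty\}$ (note $d_{FS}(0,z)=\frac{2}{\pi}\arctan|z|\sim\frac{2}{\pi}|z|$ near the origin), but your argument uses nothing beyond continuity, so this is harmless. Your construction also produces mirrored pairs at every small scale $\rho$, in the spirit of Remark \ref{curve}, though extracting a \emph{continuous} family of such pairs in $\rho$ would require a further selection argument beyond the bare intermediate value theorem.
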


\begin{proof}  
It suffices to find $z$ and $w$ near $c$ such that $z\neq w$, $|z|=|w|$ and $R(z)=R(w)$. Suppose $c\notin\left\{0,\infty\right\}$. There exist $k\geq 2$ and a small neighborhood $U_c$ of $c$ such that the map $R|_{U_c}$ acts as a small perturbation of $v+a_k(z-c)^k$, $a_k\neq 0$. Fixing $\epsilon>0$ small enough we may assume that the curve
$$K_{\epsilon}=\left\{R^{-1}(v+\epsilon e^{i\varphi}),\;\; \varphi\in [0,2\pi)\right\}\cap U_c$$
is a small perturbation of a circle centered in $c$ and of radius $\sqrt[k]{\epsilon}$. Hence, given $\theta\in[0,2\pi)$ and $w_{\theta}=v+\epsilon e^{i\theta}$ there are $k$ distinct points $z_1,z_2,\ldots,z_k\in K_\epsilon$ ordered in the counter clockwise direction and such that $R(z_j)=w_{\theta}$. We denote by $J_j$ the open arcs connecting two consecutive points $z_j$ and $z_{j+1}$ or $z_k$ and $z_1$ respectively. Note that the closure of every such arc is mapped into a full circle around $v$. 

For $\epsilon>0$ small enough the set $K_{\epsilon}$ intersects the circle $|z|=c$ in precisely two points, one in the upper-half plane and one in the lower-half plane. Without loss of generality we may assume that these two points are contained in two distinct $J_{j_1}$ and $J_{j_2}$ (if not we perturb $\theta)$. Furthermore, we may assume that $|z_{j_1}|<|z_{j_1+1}|$ and $|z_{j_2}|>|z_{j_2+1}|$. Since each of these sets is mapped into a full circle around $v$, continuity implies that there exist $\zeta_{j_1}\in J_{j_1}$ and $\zeta_{j_2}\in J_{j_2}$ such that $|\zeta_{j_1}|=|\zeta_{j_2}|$ and $R(\zeta_{j_1})=R(\zeta_{j_2})=v+\epsilon e^{i\varphi}$ for some $\varphi\in[0,2\pi)$. This concludes the proof for $c\notin\left\{0,\infty\right\}$.

If $c\in\left\{0,\infty\right\}$, we construct the arcs $J_j$ in an analogous way. Furthermore, we may assume that $|z_{1}|\leq |z_j|$. Hence, by the similar argument as above there exist $\zeta_{1}\in J_{1}$ and $\zeta_{k}\in J_{k}$ such that $|\zeta_{1}|=|\zeta_{k}|$ and $R(\zeta_{j})=R(\zeta_{k})$.
\end{proof}

\begin{remark}\label{curve} Note that the points $\zeta_{j}$ depend continuously on $\epsilon>0$. Hence $M$ always contains a piecewise continuous curve that approaches $c$ when $\epsilon\to 0$. However, the point $c$ needs not to be in $M$.
\end{remark}

We proceed by giving two explicit examples of the set $M$.

\medskip

\begin{example} Let $R(z)=(z-1)^2+1$. Since $R$ has real coefficients, it follows that $|R(z)|=|R(\bar{z})|$ and hence  $\Phi(z)=\Phi(\bar{z})$. Therefore $\C\backslash\R\subseteq M$ (the points $0$ and $\infty$ are never mirrored). Moreover, let $z\in\R$. Given $n\in\N_0$ we can define a polynomial in three real variables $x$, $y$ and $z$:
 $$\phi_n(x,y,z):=|R^n(x+iy)|^2-|R^n(z)|^2.$$
A point $x+iy$ is mirrored by $z$ if and only if $\phi_n(x,y,z)=0$ for all $n\in\N$. In order to find solutions of this system, we can try to determine the basis for the ideal generated by polynomials $\phi_n$. In fact one can compute that the Gr\"{o}bner basis consists of three polynomials
$-y^2z^2(1-z)$, $y^2( 4 + 4 y^2- 8 z +z^2)$ and $3 x +  y^2(3-z^2) - 3 z$. Observe that the intersection of the zero sets of these three polynomials in $\R^3$ is equal to $\{(\zeta,0,\zeta)\mid\zeta\in\R\}\cup\{(\frac{1}{2},-\frac{\sqrt{3}}{2},1),(\frac{1}{2},\frac{\sqrt{3}}{2},1)\} $, which implies that $1$ is the only mirrored point on the real axis. Thus $M=\{1\}\cup \C\backslash\R$. Finally, let us emphasize that in this case $M$ contains a dense open subset of $\hC$, which is also true for every rational map with real coefficients. $\clubsuit$
\end{example} 

\medskip

\begin{example} Let us conjugate $P(z)=iz^2$ by a fractional transform $\varphi(z)=\frac{2(z-1)}{z+2}$: $$R(z)=\varphi^{-1}\circ P\circ\varphi(z).$$ 
Note that $\varphi(\partial\D(-\frac{1}{2},\frac{3}{2}))=i\R$ and that for every $z\in\partial\D(-\frac{1}{2},\frac{3}{2})$ we have $\overline{\phi(z)}=\phi(\overline{z})$. This implies that $\Phi(z)=\Phi(\bar{z})$ for all $z\in\partial\D(-\frac{1}{2},\frac{3}{2})\setminus\left\{-2,1\right\}$. We claim that this is precisely the mirrored set $M$. 

Note that the points from $\D(2,2)$ and $\hC\backslash\overline{\D}(2,2)$ are attracted to fixed points of different modulus, namely $1$ and $-2$. Moreover, a point $z\in\cJ=\partial\D(2,2)$ with $|z|=1$ or $|z|=2$ is not periodic. Therefore, there can be no mirroring between these three sets and we can discuss them separately. Given $z\in\partial\D(2,2)$ the situation is clear. Such a point can only be mirrored by $\bar{z}$. However, this happens only for $\varphi^{-1}(\pm i)\in \D(-\frac{1}{2},\frac{3}{2})$. If $z\in\D(2,2)\setminus\left\{1\right\}$ the iterates $R^n(z)$ move towards $1$. However, close to this point the prime circles are almost identical to parallel lines and $R$ is almost $z\rightarrow \frac{2}{3}i(z-1)^2+1$. 

\begin{figure}[h]
\centering
\includegraphics[width=.25\textwidth]{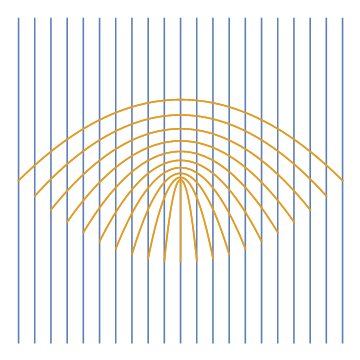}
	\
\caption{The orange curves represent the image of the blue vertical lines under the map $z\rightarrow \frac{2}{3}i(z-1)^2+1$.}
\end{figure}

\noindent In such an approximative setting, a vertical line intersects the image of some other line in a unique point. The only exception is the line $z=1+iy$, $y\in\mathbb{R}$, which is mapped into a ray below the point $1$. Therefore this has to be the line  approximating $\partial\D(-\frac{1}{2},\frac{3}{2})\backslash\left\{1\right\}$ and there can be no other mirrored points near $1$. Next, observe that if $z,w\in \D(2,2)$, we have $R(z)=R(w)$ only when $w=z$ or $w=\overline{z}$. Moreover, $R(z)$ and $R(\overline{z})$ either belong to different components of $\D(2,2)\backslash \partial\D(-\frac{1}{2},\frac{3}{2})$ or else they are both contained in $\partial\D(-\frac{1}{2},\frac{3}{2})$. This implies that given $z\in M\cap\D(2,2)$ there is $n\in\N$ such that $R^n(z)\in \partial\D(-\frac{1}{2},\frac{3}{2})$. Thus $R^{n-1}(z)$ is contained in $\partial\D(-\frac{1}{2},\frac{3}{2})$ or in $[0,4]$. However, the interval $[0,4]$ contains no mirrored points. Therefore, $\partial\D(-\frac{1}{2},\frac{3}{2})\cap (\D(2,2)\setminus\left\{1\right\})$ equals to $M\cap\D(2,2)$. We treat the set $\hC\backslash\overline{\D}(2,2)$ in an analogous way. $\clubsuit$ 
\end{example}

\medskip

According to the examples above, the set $M$ can be of two types: either it contains a dense and open subset of $\hC$ or its dimension is equal to $\dim_\mathbb{R}M= 1$. We prove in the sequel that these are the only two cases that can occur. In order to do this, we present $M$ in the spirit of sub-analytic sets.

The class of sub-analytic sets is generated by images of proper real analytic maps into real analytic manifolds, with respect to operations like finite union, finite intersection and difference. Hironaka \cite{H} proved that every such set $X$ admits a locally finite stratification in which the strata are locally closed, connected real-analytic sub-manifolds in the ambient manifold. This enables us to define the topological dimension $\mathrm{dim}_{\R}X$ which does not depend on the choice of stratification. Furthermore, the points in $X$ are of two types. Regular, if in their vicinity $X$ is an analytic sub-manifold of dimension $\mathrm{dim}_{\R} X$, and singular otherwise. Moreover, it is known that the set of singular points is sub-analytic and that $\mathrm{dim}_{\R}\mathrm{Sing}(X)\leq\mathrm{dim}_{\R}X-1$. However, if $\dim_{\R}\mathrm{Sing}(X)\leq 1$, the set of singular points is semi-analytic as well. The reader is referred to \cite{T, BM, L} for more details.

In our case $X=\cS$ is the image of the proper real analytic map $\Phi:\hC\rightarrow [0,1]^{N+1}$. Hence $\cS$ is a compact sub-analytic and $\dim_{\R}\mathrm{Sing}(\cS)\leq 1$. Therefore, the set $\mathrm{Sing}(\cS)$ is compact and semi-analytic. Let us define the set 
$$
S:=\Phi^{-1}(\mathrm{Sing}(\cS)).
$$
Recall that a map $R(z)\neq az^{\pm d}$ admits at most one infinite $\Phi$-fiber, that is, an invariant prime circle. Therefore, since $\Phi$ is proper and real analytic, the set $S$ is compact, semi-analytic and of real dimension at most one. Moreover, if $R(z)=az^{\pm d}$, we have $S=\{0,\infty\}$ and $M=\hC \backslash S$. Finally, note that $M\subseteq S$ when a generic point is not mirrored. In contrast, $\hC\setminus S\subset M$ if the map $\Phi$ is a multiple cover over the set $\mathrm{Reg}(\cS)$. This gives the following theorem.

\begin{theorem}\label{mirrored} Let $R$ be a rational map of degree $d\geq 2$. Then its mirrored set $M$ is non-empty and semi-analytic. Moreover, it either contains a dense and open subset of the Riemann sphere $\hC$ or else $\dim_{\R}M=1$. 
\end{theorem}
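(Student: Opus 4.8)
The plan is to reduce the statement to the covering structure of the proper real-analytic map $\Phi$ away from a set of dimension at most one, and then to read off semi-analyticity from the fact, recalled before the statement, that a sub-analytic set of dimension at most one is automatically semi-analytic. Non-emptiness of $M$ is exactly Lemma~\ref{injective}. For sub-analyticity I would first write $M=\pi_1(Z_N\setminus\Delta)$, where $Z_N=\{(z,w):\Phi(z)=\Phi(w)\}$ is the compact real-analytic set from the proof of Lemma~\ref{reduction2}, $\Delta\subset\hC\times\hC$ is the diagonal, and $\pi_1$ is the projection onto the first factor. Since $Z_N\setminus\Delta$ is semi-analytic and $\pi_1$ is proper (because $\hC$ is compact), the image $M$ is sub-analytic; its complement, the set of non-mirrored points, is sub-analytic as well.

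Next I would describe the fibers over the regular part of $\cS$. Enlarging $S$ if necessary by the (dimension $\le 1$) set of critical values of $\Phi$, the restriction $\Phi\colon\hC\setminus S\to\mathrm{Reg}(\cS)$ becomes a proper local diffeomorphism between real-analytic surfaces, hence a covering, so the fiber cardinality is locally constant on $\mathrm{Reg}(\cS)$ and bounded by $2d^2$ thanks to Proposition~\ref{cardinality}. This produces the desired dichotomy. If the generic fiber is a single point, then no point of $\hC\setminus S$ is mirrored, so $M\subseteq S$; being sub-analytic of dimension at most one, $M$ is then semi-analytic, and Remark~\ref{curve} exhibits a genuine curve inside $M$, forcing $\dim_{\R}M=1$. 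If instead the cover has degree at least two, then every point of $\hC\setminus S$ is mirrored, so $\hC\setminus S\subseteq M$; since $S$ has dimension at most one this is a dense open subset of $\hC$, and $M=(\hC\setminus S)\cup(M\cap S)$ is the union of an open semi-analytic set with a sub-analytic set of dimension at most one, hence is semi-analytic. The special map $R(z)=az^{\pm d}$ falls into this second case with $S=\{0,\infty\}$ and $M=\hC\setminus S$.

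The delicate point — which I expect to be the main obstacle — is to guarantee that the dichotomy is genuine, i.e. that an open set of mirrored points is automatically dense, ruling out the intermediate possibility in which the covering degree equals one on some component of $\mathrm{Reg}(\cS)$ and exceeds one on another. The natural tool is the dynamical invariance of $M$: the identity $\Phi\circ R=Q\circ\Phi$ shows that $R$ carries each $\Phi$-fiber into a $\Phi$-fiber, so $R(M)\subseteq M$ outside the lower-dimensional locus where $R$ collapses a mirrored pair, and therefore the full-dimensional part of $M$ is forward invariant up to a set of dimension at most one. Combining this invariance with the standard blow-up property of the Julia set — forward iterates of any open set meeting $\cJ$ eventually cover $\hC$ minus an exceptional set of at most two points — one propagates a single open set of mirrored points to a dense one. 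I would therefore spend the bulk of the argument verifying that the open part of $M$ must meet $\cJ$ and controlling the lower-dimensional exceptional loci along the orbit; the remaining assertions are immediate consequences of the sub-analytic geometry summarised before the statement.
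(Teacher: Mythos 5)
Up to your last paragraph, your argument coincides with the paper's proof: the authors likewise write $M=\pi_1(\Lambda\backslash\Gamma)$, where $\Lambda=\{(z,w)\mid\Phi(z)=\Phi(w)\}$ is your $Z_N$ and $\Gamma$ the diagonal, use properness of $\pi_1$ to get sub-analyticity, invoke Remark \ref{curve} for $\dim_{\R}M\geq 1$, and split into the two cases $\Phi^{-1}(\mathrm{Reg}(\cS))\subseteq M$ versus $M\subseteq S$, concluding semi-analyticity exactly as you do (complement of dimension at most one in the first case, $M$ inside the compact semi-analytic one-dimensional set $S$ in the second). The divergence is at the dichotomy itself: the paper uses no dynamics there at all; it reads the two cases off from $\Phi$ being a covering over $\mathrm{Reg}(\cS)$, with case i) corresponding to $\deg\Phi\geq 2$ and case ii) to $\deg\Phi=1$, the degree of the proper map $\Phi$ being treated as globally well defined, so the mixed-degree scenario you isolate never enters their account. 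Your instinct that locally constant fiber cardinality on a possibly disconnected $\mathrm{Reg}(\cS)$ deserves comment is fair; the problem is that the repair you sketch does not work.

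The decisive flaw is the appeal to the blow-up property of $\cJ$, which applies only to open sets that intersect $\cJ$, whereas the open part of $M$ can be disjoint from the Julia set. Concretely, for $R(z)=z^2-2$, or any rational map with real coefficients and $\cJ\subseteq\R\cup\{\infty\}$, one has $R^n(\bar z)=\overline{R^n(z)}$, hence $\Phi(z)=\Phi(\bar z)$ and $\C\backslash\R\subseteq M$, while $\cJ=[-2,2]$: the open part of $M$ does not meet $\cJ$, so the verification you defer to ``the bulk of the argument'' (that it must meet $\cJ$) is not merely unproved but false; the theorem holds in such cases only because $\C\backslash\R$ happens to be dense already. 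A secondary inaccuracy: the collapse locus $\{(z,w)\in\Lambda\backslash\Gamma\mid R(z)=R(w)\}$ need not have dimension at most one --- for $R(z)=g(z^m)$, $m\geq 2$, the whole rotated graph $w=e^{2\pi i/m}z$ lies in it --- so the claim that the full-dimensional part of $M$ is forward invariant ``up to a set of dimension at most one'' is unjustified as stated (in that example density of $M$ is trivial, but it shows the reduction is unsound). Since the one step at which your proposal departs from the paper is precisely the step left unexecuted, and the proposed mechanism for executing it fails on explicit maps, the proposal has a genuine gap there; closing it would require either justifying a globally defined degree for $\Phi$ over $\mathrm{Reg}(\cS)$ (e.g.\ via irreducibility or analytic continuation of the two-dimensional part of $\Lambda$), not via the dynamics of open subsets of the Fatou set.
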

\begin{proof}
Let us denote by $\Gamma$ the diagonal in the set $\hC\times\hC$ and by $\Lambda$ the set  
$$
\Lambda=\{(z,w)\in\hC\times\hC\mid \Phi(w)=\Phi(z)\}.
$$ 
Then $M=\pi_1(\Lambda\backslash \Gamma)$, where  $\pi_1$ denotes the projection to the first component. Since the sets $\Lambda$ and $\Gamma$ are real-analytic, the set $\Lambda\backslash \Gamma$ is semi-analytic. Moreover since $\pi_1$ is proper, it follows that $\pi_1(\Lambda\backslash \Gamma)$ is sub-analytic. By Remark \ref{curve} we have $\dim_{\R} M\geq 1$. 

Since $\Phi$ is a covering map over the set of regular points $\mathrm{Reg}(\cS)$, one of the following two cases has to occur:
\begin{enumerate}
\item[i)] $\Phi^{-1}(\mathrm{Reg}(\cS))\subseteq M$. 
\item[ii)] $M\subseteq \pi_1\left(\overline{\Lambda\backslash \Gamma}\right)=S$.  
\end{enumerate}
In particular, case $i)$ corresponds to $\deg\Phi\geq 2$ and case $ii)$ corresponds to $\deg\Phi=1$. 
Since $\Phi^{-1}(\mathrm{Reg}(\cS))\subseteq\hC$ is open and dense, and since $M$ is sub-analytic it follows that $\hC\backslash M$ is semi-analytic of dimension at most 1. But this implies that $M$ is semi-analytic as well. We have seen above that $S$ is a compact semi-analytic set of dimension at most one, therefore, in the case $ii)$, we have $\dim_\R M=1$ and hence $M$ is semi-analytic.
\end{proof}

\noindent Note that the single infinite fiber of an exceptional map will always be in $S$. Moreover, let us denote by $\mathcal{X}$ the set of points $c\in\hC$ for which there exists a sequence of mirrored pairs $(z_n,w_n)$, i.e. $z_n\neq w_n$ and $\Phi(z_n)=\Phi(w_n)$, that converges to $(c,c)$. Clearly the map $\Phi$ can not be injective in any neighborhood of such a point, hence $\mathcal{X}\subset S$. Furthermore, by Remark \ref{curve} the critical points of $R$ are contained in $\mathcal{X}$ and therefore in $S$. However, as pointed out, a point from $\mathcal{X}$ needs not to be mirrored. Hence, in general, we have $M\neq S$ in the case $i$) and $M\neq\hC\backslash S$ in the case $ii$). We end this discussion with the following corollary that will be used in the proof of Theorem \ref{main1}. It follows directly from the properties of $S$ and the fact that $\Phi$ is a covering map over the set of regular points.

\begin{corollary}\label{cover} Let $U\subset\hC$ be an open neighborhood of $S$. There exists $\epsilon>0$ such that for every $z\in \hC\backslash U$ the set $$B'(z,\epsilon):=\{w\in\hC\mid ||\Phi(w)-\Phi(z)||<\epsilon\}, $$
consists of connected components, where each component is contained in small disk centered
at one of the mirror points of $z$. Moreover these disks are pairwise disjoint and $R$ is one-to-
one on each of them.  
\end{corollary}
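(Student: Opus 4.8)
The plan is to run a single compactness argument on the compact set $K:=\hC\setminus U$, leaning on two facts established above: that $\Phi$ restricts to a covering map onto $\mathrm{Reg}(\cS)$, and that every degeneracy of $\Phi$ and of $R$ has been swept into $S$ (the critical points of $R$ lie in $\mathcal{X}\subset S$, and the only infinite $\Phi$-fiber of a map $R(z)\neq az^{\pm d}$ lies in $S$; we may assume $R(z)\neq az^{\pm d}$, since otherwise the fibers of $\Phi$ are whole prime circles and $R$ is strongly exceptional). First I would note that $K\subset\hC\setminus S=\Phi^{-1}(\mathrm{Reg}(\cS))$, so $\Phi(K)$ is a compact subset of $\mathrm{Reg}(\cS)$ disjoint from the compact set $\mathrm{Sing}(\cS)$; hence $\delta:=\dist(\Phi(K),\mathrm{Sing}(\cS))>0$. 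By Proposition \ref{cardinality} every fiber meeting $K$ is finite with at most $2d^{2}$ points. The whole proof then reduces to choosing one $\epsilon$ below several uniform scales.

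The crucial preliminary step is to control entire fibers rather than just their base points. Using uniform continuity of $\Phi$ on the compact manifold $\hC$, I would fix $\eta_{0}>0$ such that $\dist(w,S)<\eta_{0}$ forces $\dist(\Phi(w),\mathrm{Sing}(\cS))<\delta$. Every mirror of a point $z\in K$ lies in the fiber $\Phi^{-1}(\Phi(z))$, and $\Phi(z)$ is at distance $\geq\delta$ from $\mathrm{Sing}(\cS)$; therefore $z$ together with all of its mirrors lies in the compact shell $K':=\{w\in\hC:\dist(w,S)\geq\eta_{0}\}\subset\hC\setminus S$. Compactness of $K'$ then yields two uniform constants. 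A separation radius $\sigma>0$ such that any two distinct points of one fiber meeting $K'$ are at distance $\geq\sigma$: otherwise a sequence of mirror pairs would converge to a diagonal point $(c,c)$ with $c\in K'$, forcing $c\in\mathcal{X}\subset S$ and contradicting $\dist(c,S)\geq\eta_{0}$. And an injectivity radius $\rho>0$ for $R$, one-to-one on every $\rho$-ball centered in $K'$, which exists because the critical points of $R$ lie in $S$ and are thus at distance $\geq\eta_{0}$ from $K'$.

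Finally I would assemble $\epsilon$ from the even-covering property. For each $y\in\Phi(K)$ the covering structure provides an evenly covered neighborhood $B(y,r_{y})\cap\cS\subset\mathrm{Reg}(\cS)$ (with $r_{y}<\delta$, so it stays in $\mathrm{Reg}(\cS)$), and after shrinking $r_{y}$ I may assume each of its finitely many sheets has diameter less than $m:=\min(\rho,\sigma/2)$. Covering $\Phi(K)$ by finitely many balls $B(y_{i},r_{y_{i}}/2)$ and setting $\epsilon:=\min_{i}r_{y_{i}}/2$, each $z\in K$ has $B(\Phi(z),\epsilon)\cap\cS$ inside a single evenly covered $B(y_{i},r_{y_{i}})\cap\cS$. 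Hence $B'(z,\epsilon)=\Phi^{-1}(B(\Phi(z),\epsilon)\cap\cS)$ is distributed among the disjoint sheets, and as each sheet contains exactly one point of $\Phi^{-1}(\Phi(z))$ — namely $z$ or one of its mirrors — every connected component of $B'(z,\epsilon)$ lies in the disk of radius $m$ about that point. These disks are pairwise disjoint because distinct fiber points are $\sigma$-separated and $2m\leq\sigma$, and $R$ is one-to-one on each because $m\leq\rho$. I expect the only genuine difficulty to be exactly this uniformity — promoting the pointwise statements (even covering, injectivity of $\Phi$ on sheets, non-criticality of $R$) to constants independent of $z\in K$ — and it is resolved precisely by the inclusions $\mathcal{X}\subset S$ and $\mathrm{Crit}(R)\subset S$, which guarantee that keeping a fixed distance $\eta_{0}$ from $S$ simultaneously keeps the entire fiber away from every degeneracy.
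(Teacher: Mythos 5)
Your proof is correct and follows exactly the route the paper gestures at --- the paper offers no detailed argument, saying only that the corollary ``follows directly from the properties of $S$ and the fact that $\Phi$ is a covering map over the set of regular points,'' and your compactness argument (pinning whole fibers inside $K'$ via uniform continuity, extracting the separation radius from $\mathcal{X}\subset S$ and the injectivity radius from $\mathrm{Crit}(R)\subset S$, then shrinking evenly covered neighborhoods) supplies precisely the uniformity details that sentence leaves implicit. Your explicit exclusion of $R(z)=az^{\pm d}$ is also apt, since for those maps the fibers are prime circles and the statement genuinely fails, a caveat the paper leaves to context.
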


Finally, let us prove that, unless $\cJ=\hC$, the mirrored set $M$ of a generic rational map $R$ satisfies the property $\dim_{\R}M=1$.  

\begin{proposition}
Let $R$ be a non-exceptional rational map of degree $d\geq 2$ admitting an attracting periodic point $z_0\in \C^*$ with a non-real multiplier. Assume that $z_0$ is not mirrored by any other non-repelling periodic point or a point belonging to the set $R^{-1}(z_0)$. Then $\dim_\mathbb{R}M= 1$.
\end{proposition}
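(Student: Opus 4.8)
The plan is to argue by contradiction using the dichotomy of Theorem \ref{mirrored}: either $M$ contains a dense open subset of $\hC$ or $\dim_\R M=1$. Since $\dim_\R M\ge 1$ always holds (Remark \ref{curve}), it suffices to exclude the first alternative, which, as recorded in the proof of Theorem \ref{mirrored}, is exactly the case $\deg\Phi=m\ge 2$; in it $\Phi$ is an $m$-sheeted covering over $\mathrm{Reg}(\cS)$ and $\Phi^{-1}(\mathrm{Reg}(\cS))\subseteq M$. Throughout I use the translation, valid by Lemma \ref{reduction2} and the fact that $d_{FS}(0,\cdot)$ depends only on the modulus, that $\Phi(z)=\Phi(w)$ if and only if $|R^n(z)|=|R^n(w)|$ for every $n\ge 0$. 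I first treat the case where $z_0$ is a fixed point, and indicate at the end how to pass to period $q$ by working with $R^q$.

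The first key step, and the only place the non-real multiplier is essential, is to show $z_0\notin\mathcal{X}$, where $\mathcal{X}$ is the set of limits of colliding mirror pairs defined after Theorem \ref{mirrored}. Suppose not: there are mirror pairs $(z_k,w_k)\to(z_0,z_0)$ with $z_k\neq w_k$. Since $z_0$ is attracting, for large $k$ both points lie in a Koenigs neighborhood $U$ on which a coordinate $\psi$ conjugates $R$ to multiplication by $\lambda=R'(z_0)$ and is injective. Writing $R^n(z_k)-z_0\sim\lambda^n\psi(z_k)$ and likewise for $w_k$, the identity $|R^n(z_k)|^2=|R^n(w_k)|^2$ expands to $2\,\mathrm{Re}\big(\bar z_0\lambda^n(\psi(z_k)-\psi(w_k))\big)+O(|\lambda|^{2n})=0$; dividing by $|\lambda|^n$ and letting $n\to\infty$ leaves $\mathrm{Re}\big(\bar z_0(\psi(z_k)-\psi(w_k))e^{in\theta}\big)\to 0$, where $\lambda=|\lambda|e^{i\theta}$. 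As $\lambda$ is non-real, $\theta\not\equiv 0\pmod{\pi}$, so the oscillating factor $e^{in\theta}$ forces the fixed phasor $\bar z_0(\psi(z_k)-\psi(w_k))$ to vanish, giving $z_k=w_k$, a contradiction. Hence $z_0\notin\mathcal{X}$, and $\Phi$ is locally injective at $z_0$. Choosing then $z_k\to z_0$ in the basin with $z_k\in\Phi^{-1}(\mathrm{Reg}(\cS))$, with mirrors $w_k\to w_0$, we obtain $\Phi(w_0)=\Phi(z_0)$ and $w_0\neq z_0$ precisely because $z_0\notin\mathcal{X}$; thus $z_0$ is mirrored. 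The fibre $F_0:=\Phi^{-1}(\Phi(z_0))$ is finite by Proposition \ref{cardinality}(a) and forward $R$-invariant (since $Q$ fixes $\Phi(z_0)$ and $R$ carries each $\Phi$-fibre into the fibre of its $Q$-image), so the whole orbit of $w_0$ stays in $F_0$ and $w_0$ is pre-periodic.

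Now I split into two cases. If the orbit of $w_0$ reaches $z_0$, let $t\ge 1$ be minimal with $R^t(w_0)=z_0$; then $p:=R^{t-1}(w_0)$ satisfies $R(p)=z_0$ and $\Phi(p)=Q^{t-1}\Phi(w_0)=\Phi(z_0)$, so $p$ is a mirror of $z_0$ lying in $R^{-1}(z_0)$ with $p\neq z_0$, contradicting the hypothesis. Otherwise the orbit lands on a cycle $\mathcal{C}\subset F_0$ with $z_0\notin\mathcal{C}$, and any $\zeta\in\mathcal{C}$ is a periodic mirror of $z_0$ distinct from $z_0$. To finish I would show $\zeta$ is non-repelling: using the covering structure over regular values and the local branch through $z_0$, construct a local homeomorphism $h$ from a neighborhood of $z_0$ onto a neighborhood of $\zeta$ with $\Phi\circ h=\Phi$; since $R$ maps mirror pairs to mirror pairs and $Q^{p}$ fixes $\Phi(z_0)$, the map $h$ conjugates the germ of $R^{p}$ at $z_0$ to that at $\zeta$. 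Because $z_0$ is attracting, hence in the Fatou set, a topological conjugacy forces $\zeta$ to be non-repelling, again contradicting the hypothesis. In either case $\deg\Phi\ge 2$ is impossible, so $\dim_\R M=1$; for period $q$ one repeats the argument with $R^q$ on the $R^q$-invariant fibre, $\lambda$ being the multiplier of $z_0$.

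The main obstacle is the final step: verifying that the conjugacy $h$ genuinely exists and intertwines the dynamics along the \emph{whole} mirror cycle. This requires that none of the points $R^j(\zeta)$ lie on the fold locus of $\Phi$, i.e. in $\mathcal{X}$, so that Corollary \ref{cover} and the local sheet structure apply there. The non-real multiplier controls this at $z_0$ itself; the delicate point is to propagate local injectivity of $\Phi$ from $z_0$ to the points of $\mathcal{C}$ through the mirror correspondence. An attractive alternative that avoids multiplier bookkeeping is to transport normality of $\{R^n\}$ from $z_0$ to $\zeta$ via $h$, concluding directly that $\zeta\in\hC\setminus\cJ$; either way the crux is the regularity of $\Phi$ along the periodic mirror orbit.
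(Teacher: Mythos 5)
Your first two steps are sound and in fact sharpen what the paper only sketches: the Koenigs-coordinate computation showing $z_0\notin\mathcal{X}$ is a rigorous version of the paper's ``prime circles look like parallel lines mapped by $z\to z_0+\lambda(z-z_0)$'' heuristic, and the observation that the finite fiber $F_0=\Phi^{-1}(\Phi(z_0))$ is forward invariant, so that any mirror of $z_0$ is preperiodic, is correct. But there is a genuine gap exactly where you flag it, and it is not a technicality one can patch by ``propagating local injectivity.'' The hypotheses do not exclude the scenario in which $w_0$ is preperiodic to a \emph{repelling} cycle $\mathcal{C}$ (note that every point of $F_0$ has its entire forward orbit on the single prime circle $\partial\D_{|z_0|}$), and your mechanism for ruling this out fails structurally: since $\mathcal{C}$ is repelling, $\mathcal{C}\subseteq\cJ$, and nothing guarantees $\Phi(z_0)\in\mathrm{Reg}(\cS)$ --- local injectivity of $\Phi$ at $z_0$ (which your multiplier argument does give) says nothing about folds at the \emph{other} points of the fiber, and if some $R^j(\zeta)\in\mathcal{X}$ then $\Phi(z_0)\in\mathrm{Sing}(\cS)$ and no sheet structure, hence no $h$, exists. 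Even over regular values, $\Phi\circ h=\Phi$ only tells you that $R(h(z))$ lies in the fiber of $\Phi(R(z))$; to get $h\circ R=R\circ h$ you must know that $R$ maps the sheet through $\zeta$ to the sheet through $R(\zeta)$, and at a repelling $\zeta$ small neighborhoods are expanded out of the region where Corollary \ref{cover} controls the sheets. The relation $\Phi(z)=\Phi(w)$ constrains only the moduli $|R^n(\cdot)|$, which is far too weak to force a topological conjugacy of germs, so the intended conclusion that $\zeta$ is non-repelling is not reachable along this route.

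The paper sidesteps this case rather than excluding it: it never shows that $z_0$ itself is mirrored. Instead it confines the possible mirrors of all points \emph{near} $z_0$. By Sullivan's non-wandering theorem, the classification of periodic Fatou components and the non-mirroring hypothesis, mirrors of $z\in\D(z_0,\epsilon)$ lie either in $\overline{\D}(z_0,\epsilon)$ (after choosing $\epsilon$ so that $R$ is injective on the closed disk, $R(\overline{\D}(z_0,\epsilon))\subset\D(z_0,\epsilon)$, and the prime circles through the $z_0$-component of $R^{-1}(\overline{\D}(z_0,\epsilon))$ avoid the other components) or in $\cJ$. Mirroring within the disk is killed by the non-real multiplier --- the same idea as your step one --- and mirrors in $\cJ$ are then \emph{tolerated}: since $z_0$ is in the Fatou set, $\cJ$ has empty interior, and combined with the uniform bound $\#\Phi^{-1}(y)\leq 2d^2$ from Proposition \ref{cardinality} and the covering structure over $\mathrm{Reg}(\cS)$, the set of points near $z_0$ mirrored only by $\cJ$ cannot contain an open set; hence $M$ is not dense open, and Theorem \ref{mirrored} gives $\dim_{\R}M=1$. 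To salvage your argument you should replace the conjugacy step by this smallness argument: you need not show that a repelling mirror cycle is impossible, only that points whose mirrors all lie in $\cJ$ cannot fill an open neighborhood of $z_0$.
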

   
\begin{proof}
By Theorem \ref{mirrored} we know that $\dim_{\R}M\neq 1$ if and only if $M$ is open and dense. Hence, let us prove that near $z_0$ the set $M$ has empty interior. First note that if $z$ and $w$ are mirrored for $R$, then they are mirrored for $R^n$ as well. Hence, without loss of generality we may assume that $z_0$ is a fixed point of $R$ with a non-real multiplier. By Sullivan's non-wandering component theorem every Fatou component of $R$ is pre-periodic. But since $z_0$ is not mirrored by any other non-repelling periodic point, it follows from the classification of periodic Fatou components that points near $z_0$ can only be mirrored by the points whose orbit converges to $z_0$ or by the points from $\cJ$.

Let us fix $\epsilon>0$ small enough so that the following conditions are fulfilled:
\begin{enumerate}
\item[i)] $R$ is one to one $\overline{\D}(z_0,\epsilon)$ 
\item[ii)] $R(\overline{\D}(z_0,\epsilon))\subset \D(z_0,\epsilon)$
\item[iii)] The prime circles passing through the $z_0$-component of $R^{-1}(\overline{\D}(z_0,\epsilon))$ do not meet any other components of this set.  
\end{enumerate} 
The conditions $ii)$ and $iii)$ imply that all the mirrors of $z\in \D(z_0,\epsilon)$ belong to the union $\overline{\D}(z_0,\epsilon)\cup\cJ$. Next, observe that for $\epsilon>0$ small enough, similarly as in the Example 3.8., the prime circles through  $\overline{\D}(z_0,\epsilon)$ look like parallel lines mapped with $z\to z_0 + \lambda(z-z_0).$ Hence, since $\lambda \notin\R$, two points from $\overline{\D}(z_0,\epsilon)$ can never mirror each other. Therefore, we conclude that the points in $\overline{\D}(z_0,\epsilon)$ can only be mirrored by the points in $\cJ$. However, $z_0$ belongs to the Fatou set. Therefore $\cJ\neq \hC$ has a non-empty interior. This, together with the bound $\#\Phi^{-1}(y)\leq 2d^2$, $y\in\cS$, from Proposition \ref{cardinality}, concludes the proof.
\end{proof}

\noindent Let us explain the meaning of the above statement. A generic rational map is non-exceptional. Moreover, the number of the non-repelling periodic points together with their $\Phi$-fibers is finite. Hence, given an attracting point $z_0$ the above conditions can be obtained by a small perturbation of $R$. Thus, it is reasonable to believe that, as in \cite[Theorem 3.8.]{FP}, the maps with $\dim_{\R}M>1$ can be classified. Therefore we end this section with an open question that we were unable to answer so far: {\it Suppose that $M$ contains an open and dense subset of $\hC$. Does this imply that $R$ is rotationally conjugate to a rational function with real coefficients?}

\section{Proof of Theorem \ref{main1}}

Let us first recall the statement of Theorem \ref{main1} written in the terminology of \S 3.1.

\begin{theorem}\label{main} Let $R$ be a complex map of degree $d\geq 2$ and let $\mu_R$ be its unique measure of maximal entropy on $\hC$. If $R$ is not strongly exceptional, then $\nu_R=\Phi_*(\mu_R)$ is the unique invariant, ergodic measure of maximal entropy $\log d$ on $\cS$. 
\end{theorem}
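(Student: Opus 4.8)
The goal is to supply the one hypothesis of Proposition \ref{proposition1}(c) that is not yet verified, namely the entropy identity $h_{\mu_R}(R)=h_{\nu_R}(Q)$; once it is in hand, uniqueness of $\nu_R$ follows verbatim from that proposition. Two reductions come first. By Proposition \ref{cardinality} the fibres of $\Phi$ are finite and uniformly bounded, except for the single infinite fibre $\Phi^{-1}(y_0)$ over the invariant prime circle $\partial\D_{r_0}$ present when $R$ is exceptional and $R(z)\neq az^{\pm d}$. I would first note that this fibre is $\nu_R$-negligible: $\partial\D_{r_0}\cap\cJ$ is a closed $R$-invariant subset of $\cJ=\supp\mu_R$, so if it had positive mass then ergodicity of $\mu_R$ would force $\cJ\subseteq\partial\D_{r_0}$ and $R$ strongly exceptional, against our assumption; hence $\nu_R(\{y_0\})=0$ and, by Remark \ref{remark1}, Proposition \ref{proposition1}(c) does apply. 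Since $h_{\nu_R}(Q)\le h_{\mu_R}(R)=\log d$ is already recorded in \eqref{entropy}, everything reduces to proving $h_{\nu_R}(Q)\ge\log d$. Conceptually this is the statement that a factor with uniformly bounded finite fibres preserves measure entropy (the inner variational principle of Ledrappier--Walters would give it at once), but I prefer a direct local-entropy argument that stays inside the sub-analytic framework of \S3 and makes the mirror structure explicit.

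For the lower bound I would work with Brin--Katok local entropy: for $\mu_R$-almost every $x$,
$$\lim_{\delta\to0}\liminf_{n}-\tfrac1n\log\mu_R\bigl(B(x,\delta,n)\bigr)=\log d.$$
The passage between the two systems is Corollary \ref{cover}, which writes $B'(z,\epsilon)$ as a disjoint union of small disks about the mirrors of $z$ on which $R$ is injective. The decisive structural fact is that the mirror relation is $R$-equivariant: by Lemma \ref{reduction2}, $\Phi(m)=\Phi(z)$ forces $\Phi(R(m))=\Phi(R(z))$, so $R$ maps mirrors of $z$ to mirrors of $R(z)$. Injectivity on each disk then rigidifies itineraries: a point $x'$ whose $\Phi$-orbit $\epsilon$-shadows that of $x$ for $n$ steps is pinned, by the disk containing $x'$, to stay $\delta$-close to the $R$-orbit of a single mirror $m_0$ of $x$. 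This yields
$$\Phi^{-1}\bigl(B_Q(\Phi(x),\epsilon,n)\bigr)\subseteq\bigcup_{m\in\Phi^{-1}(\Phi(x))}B(m,\delta,n),$$
a union of at most $C_R$ dynamical balls. Pushing forward and discarding the mirrors outside $\cJ$ (which carry no $\mu_R$-mass) gives $\nu_R(B_Q(\Phi(x),\epsilon,n))\le C_R\max_{m}\mu_R(B(m,\delta,n))$, from which the Brin--Katok rate yields $h_{\nu_R}(Q,\Phi(x),\epsilon)\ge\log d-o(1)$ and hence $h_{\nu_R}(Q)\ge\log d$.

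Two points force the sub-analytic input of \S3. Corollary \ref{cover} only holds off a neighbourhood $U$ of the singular locus $S=\Phi^{-1}(\mathrm{Sing}(\cS))$, so I need the orbit of a typical $x$ to avoid $U$ most of the time. This is exactly the role of Lemma \ref{circle}: as $R$ is not strongly exceptional, $\mu_R$ cannot charge the one-dimensional semi-analytic set $S$ (else $\cJ$ would lie in a prime circle), so $\mu_R(S)=0$ and $U$ may be chosen with $\mu_R(U)$ arbitrarily small. The Birkhoff ergodic theorem then makes the times $k<n$ with $R^k(x)\in U$ a fraction $\approx\mu_R(U)$ of all times; at each such ``bad'' step the itinerary may branch, but only $C_R$-fold, so the number of branches is at most $C_R^{\,n\mu_R(U)}=e^{O(\mu_R(U))\,n}$, an exponential correction of arbitrarily small rate that I absorb by letting $\mu_R(U)\to0$.

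The step I expect to be genuinely delicate is the uniform control of the finitely many mirror branches. The estimate runs through $\max_{m}\mu_R(B(m,\delta,n))$, so to finish I must (i) commute the outer $\liminf_n$ past the maximum --- harmless, since the fibre $\Phi^{-1}(\Phi(x))$ is finite and fixed, so one may pass to a subsequence on which one mirror realizes the maximum --- and (ii) know that every mirror $m\in\cJ$ of $x$ is itself a Brin--Katok point with rate $\log d$. The second requirement means the full-measure ``good'' set must be saturated under the mirror relation, i.e. that for $\mu_R$-a.e. $x$ all of its $\cJ$-mirrors are good; proving this from the finiteness and uniform boundedness of the fibres (Proposition \ref{cardinality}) together with the $\Phi$-invariance of the good set is the crux of the argument. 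With this saturation established, the identity $h_{\mu_R}(R)=h_{\nu_R}(Q)$ holds and Theorem \ref{main} follows from Proposition \ref{proposition1}(c).
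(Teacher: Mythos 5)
Your reduction to Proposition \ref{proposition1}(c) plus the bound $h_{\nu_R}(Q)\ge\log d$, and your treatment of the single infinite fibre via Remark \ref{remark1}, all match the paper. But there is a genuine gap at the step where you invoke Lemma \ref{circle} to conclude ``$\mu_R(S)=0$, else $\cJ$ would lie in a prime circle.'' Lemma \ref{circle} only yields that $\cJ$ is contained in \emph{a circle of} $\hC$ --- an arbitrary circle or line on the sphere --- whereas strong exceptionality means $\cJ\subseteq\partial\D_r$, a circle centered at the origin. Hence $\mu_R(S)>0$ is entirely possible for non-strongly-exceptional maps: take any rational map with real coefficients whose Julia set lies in $\R\cup\{\infty\}$, e.g.\ $R(z)=z^2-2$ with $\cJ=[-2,2]$. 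There $\Phi(z)=\Phi(\bar z)$, so every point of $\cJ$ is a limit of mirrored pairs $(z,\bar z)$ and lies in the set $\mathcal{X}\subseteq S$ where $\Phi$ is nowhere locally injective; thus $\mu_R(S)=1$, yet the theorem must cover this map. Your argument therefore proves only the paper's Case 1 ($\mu_R(S)=0$). The paper's Case 2 is what handles the remaining maps: Lemma \ref{circle} produces an invariant circle $C\supseteq\cJ$ with $\mu_R(C)=1$, which is \emph{not prime} by the standing hypothesis; one then restricts the whole argument to $C$, where $\Phi|_C$ is one-to-one or two-to-one off the zero-dimensional set $\tilde S=\Phi|_C^{-1}(\mathrm{Sing}\,\Phi|_C(C))$, which is $\mu_R$-null because $\mu_R$ has no atoms, and reruns the Case 1 estimate with the one-dimensional sets $B_C'(n,\ell,\epsilon)$. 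This restriction step is absent from your proof and cannot be dispensed with.

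A secondary point: even within Case 1, the step you yourself flag as the crux --- that the Brin--Katok good set is saturated under the mirror relation, so that every mirror $m\in\cJ$ of a generic $x$ has local rate $\log d$ --- is left unproved, and the paper's proof shows it is avoidable. Instead of bounding $\nu_R(B(\Phi(x),\epsilon,n))$ by a maximum of dynamical balls around mirrors, the paper estimates $\mu_R(B'(n,0,\epsilon))=\nu_R(B(\Phi(x),\epsilon,n))$ directly by pushing the connected components of Corollary \ref{cover} \emph{forward}: on each component where the iterate is injective one uses $R^*\mu_R=d\cdot\mu_R$ to gain a factor $1/d$ per good step, paying a branching factor $2d^2$ only at the at most $n/k$ bad times, which telescopes to $\mu_R(B'(n,0,\epsilon))\le(2d^2)^{n/k}d^{-(n-n/k)}$; genericity (in Bowen's sense) is then needed for the single base point $z_0$ only. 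Reworking your estimate in this forward-image form removes the saturation issue, but the missing Case 2 remains the substantive gap.
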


\begin{proof} As explained in \S 2.1, we only have to prove that $h_{\nu_R}(Q)\geq h_{\mu_R}(R)= \log d$. The rest follows from Proposition \ref{proposition1} and Remark \ref{remark1}.

\medskip

\noindent{\bf Case 1:} Suppose that $\mu_R(S)=0$. Then given a large integer $k>1$, there exists $\delta>0$ such that the $\delta$-neighborhood $N_{\delta}(S)$ of $S$ in $\hC$ satisfies the property $\mu_R(N_{\delta}(S)) < \frac{1}{k}$. Hence, if $z_0\in\cJ$ is a \emph{generic point} in the sense of \cite{Bow}, we have: 
$$
\lim_{n \rightarrow \infty} \frac{\#\{ 0 \le j < n \mid R^j(z) \in N_{\delta}(S) \}}{n} < \frac{1}{k}.
$$
Moreover, we may assume that $z_0\notin N_{\delta}(S)$ and that the orbit of $z_0$ never hits $S$.

Let $x_0=\Phi(z_0)$. Recall that 
$$B(x_0,\epsilon,n)=\{y\in\cS\mid ||Q^k(x_0)-Q^k(y)||<\epsilon,\; 0\leq k< n\}.$$
Given $\ell<n$ we define
$$
B'(n,\ell,\epsilon):= \bigcap_{r=0}^{n-\ell-1}\{w\in \hC\mid ||\Phi(R^r(w)),\Phi(R^{r+\ell}(z_0))||<\epsilon\}.
$$
Since $B'(n,0,\epsilon)=\Phi^{-1}(B(x_0,\epsilon,n))$ this implies that 
$$
\nu_R(B(x_0,\epsilon,n))=\mu_R(B'(n,0,\epsilon)).
$$
Next, observe that $R(B'(n,\ell-1,\epsilon))\subset B'(n,\ell,\epsilon)$. Since $\mu_R$ is invariant this gives 
$$
\mu_R{(B'(n,\ell-1,\epsilon))}\leq \mu_R{(B'(n,\ell,\epsilon))}.
$$ 
We claim that, given $\epsilon>0$ small enough, we have
$$\mu_R(B'(n,0,\epsilon))\leq (2d^2)^{\frac{n}{k}}\left(\frac{1}{d}\right)^{n-\frac{n}{k}}.$$

By Corollary \ref{cover} the set $B'(n,0,\epsilon)$ can be represented as a finite union of connected and pairwise disjoint components $A_j$. Moreover, the maps $\Phi|_{A_j}$ and $R|_{A_j}$ can be assumed to be one-to-one on $A_j$. Furthermore, a similar decomposition exists for any $B'(n,\ell,\epsilon)$ where $R^{\ell}(z_0)\notin  N_{\delta}(S)$. Hence, let $m_0<n$ be the first integer for which $R^{m_0}(z_0)\in  N_{\delta}(S)$. The map $R^{m_0}$ in one-to-one on each $A_j$. Thus by the equidistribution property of $\mu_R$ we have $R^*\mu_R=d\cdot\mu_R$ and hence 
$$\mu_R(A_j)=\frac{\mu_R(R^{m_0}(A_j))}{d^{m_0}}\leq\frac{\mu_R(B'(n,m_0,\epsilon))}{d^{m_0}}.$$
Of course it is possible that several $A_j$'s are mapped into the same component of $B'(n,m_0,\epsilon)$. However, the number of these pieces is bounded by $2d^2$. Thus we have
$$\mu_R(B'(n,0,\epsilon))\leq 2d^2\frac{\mu_R(B'(n,m_0,\epsilon))}{d^{m_0}}.$$
Since $R^{m_0}(z_0)\in  N_{\delta}(S)$ we only have $\mu_R(B'(n,m_0,\epsilon))\leq \mu_R(B'(n,m_0+1,\epsilon))$ at the next step. 
Thus, by a similar argument as above we have
$$\mu_R(B'(n,m_0+1,\epsilon))\leq 2d^2\frac{\mu_R(B'(n,m_1,\epsilon))}{d^{m_1-m_0-1}},$$
where $m_0<m_1<n$ is the next iterate for which $R^{m_1}(z_0)\in  N_{\delta}(S)$. Further, we have
$$\mu_R(B'(n,0,\epsilon))\leq (2d^2)^2\frac{\mu_R(B'(n,m_1+1,\epsilon))}{d^{m_1-1}}.$$
However, note that we have chosen $z_0$ in such a way that $R^{\ell}(z_0)\in  N_{\delta}(S)$ happens in at most $n/k$ cases if $n\in\N$ is sufficiently large. This means that
$$\mu_R(B'(n,0,\epsilon))\leq (2d^2)^{\frac{n}{k}}\frac{\mu_R(B'(n,n-1,\epsilon))}{d^{n-\frac{n}{k}}}\leq (2d^2)^{\frac{n}{k}}\left(\frac{1}{d}\right)^{n-\frac{n}{k}}.$$ 
Thus for large $n\in\N$ and small $\epsilon>0$ we have. 
$$(1-\frac{1}{k})\log d - \frac{1}{k}\log 2d^2 \leq-\frac{1}{n}\log\nu_R(B(x,\epsilon,n)).$$ 
Hence  
$$(1-\frac{1}{k})\log d - \frac{1}{k}\log 2d^2 \leq h_{\nu_R}(Q).$$
Since this holds for every $k>0$ it follows that $\log d \leq h_{\nu_R}(Q)$.

\medskip 
\noindent {\bf Case 2:} Suppose now that $\mu_R(S)>0$. We first prove a general lemma which implies that such a situation is very special.
\begin{lemma}\label{circle}
Let $X\subset \hC$ be a semi-analytic set such that $\dim_{\R}X=1$ and $\mu_R(X)> 0$. Then $\cJ$ is contained in a circle of $\hC$.  
\end{lemma}
\begin{proof} Since $\dim_{\R}X=1$ the set $\textrm{Sing}(X)$ is a discrete set of points. Hence  $\mu_R(\text{Reg}(X))>0$. Therefore there exist a one dimensional irreducible semi-analytic set $L$ in $X$ (i.e. a semi-analytic curve) for which $\mu_R(L)>0$. Recall that if $A$ and $B$ are two semi-analytic curves in $\hC$ and $A\cap B\neq \emptyset$, then either $A\cap B$ is a discrete set of points or else $A\cup B$ is a semi-analytic curve. Let $\tilde{L}$ be the largest possible semi-analytic curve containing $L$. Clearly $\mu_R(\tilde{L})>0$.
Since $R:\hC\rightarrow \hC$ is proper holomorphic map of finite degree, it follows that for every $j\geq 0$ the preimage $R^{-j}(\tilde{L})$ is s semi-analytic set of dimension one and that the set $R^j(\tilde{L})$ is semi-analytic curve (since every sub-analytic set of dimension one is semi-analytic). Thus we have to consider two cases:
\begin{enumerate}
\item for all $j\geq1$ the set $R^{-j}(\tilde{L})\cap \tilde{L}$ is a discrete set of points,
\item there exists $j\geq1$ for which $R^{-j}(\tilde{L})\cap\tilde{L}$ is a semi-analytic curve.
\end{enumerate}

We first prove that in our setting  the case (1) can not happen. Assume the contrary. Then $\mu_R(R^{-j}(\tilde{L})\cap \tilde{L})=0$ for all $j\geq1$. Hence $\mu_R(R^{-j}(\tilde{L})\cap R^{-k}(\tilde{L}))=0$ for all $j\neq k$. Given any $n\in\N$ this implies that  
$$1\geq \mu_R\left(\cup_{j=0}^n R^{-j}(\tilde{L})\right)=\sum_{j=0}^n\mu_R\left(R^{-j}(\tilde{L})\right)=(n+1)\mu_R(\tilde{L})$$
where the last equality follows from the fact that $\mu_R$ is an invariant measure. However, this is impossible unless $\mu_R(\tilde{L})=0$. 

Suppose now that the case $(2)$ is valid. This means that there exists $j\geq1$ for which the set $L_0:=R^{-j}(\tilde{L})\cap\tilde{L}$ is a semi-analytic curve. Moreover, $R^j(L_0)$ is a semi-analytic curve contained in $\tilde{L}$ and since 
$R^j(L_0)\subset R^j(\tilde{L})$ the maximality of $\tilde{L}$ implies that  $R^j(\tilde{L})\subset \tilde{L}$. Thus $\tilde{L}\subset R^{-j}(\tilde{L})$. However, the measure $\mu_R$ is invariant. Thus $\mu_R(R^{-j}(\tilde{L}))=\mu_R(\tilde{L})$ and $\mu_R( R^{-j}(\tilde{L})\backslash \tilde{L}) = \mu_R(\tilde{L}\backslash R^{-j}(\tilde{L}) )=0$. Since $\mu_R$ is ergodic this implies that $\mu_R(\tilde{L})=1$.  

Finally, it follows from the above considerations that $\cJ\cap\text{Reg}(\tilde{L})\neq \emptyset$. Since a semi-analytic set is also locally finite, there exists an open set $U$ for which $\cJ \cap U\subset\text{Reg}(\tilde{L})\cap U$. Thus a relatively open subset of $\cJ$ is contained in a smooth curve. By \cite[Theorem 2]{BE} this implies that $\cJ$ lies in a circle of $\hC$.  
\end{proof}
 
\noindent Let us proceed now with the Case $2$. Since $S$ meets the assumptions of the above lemma, the Julia set of $R$ is contained in an invariant circle $C$ with $\mu_R(C)=1$. Moreover, we know that the circle $C$ is not prime since otherwise $R$ would be strongly exceptional. Therefore, the action of $\Phi$ on the set $C$ is very clear. Indeed, within $C$ every point admits at most one mirror. Equivalently, outside a zero-dimensional semi-analytic set $\textrm{Sing}(\Phi|_C(C))$ the restriction $\Phi|_C$ acts as a two-to-one or a one-to-one cover. Thus, restricting ourselves to $C$ instead of $\hC$ the set of points that needs to be avoided when computing the entropy is
$$\tilde{S}:=\Phi|_C^{-1}(\text{Sing }\Phi|_C(C))\subset C.$$ 
But note that $\mu_R(\tilde{S})=0$. Therefore, we can repeat the above proof word by word using a generic point $z_0\in \cJ$, whose forward orbit avoids $\tilde{S}$, and the one dimensional open sets
$$
B_C'(n,\ell,\epsilon):= \bigcap_{r=0}^{n-\ell-1}\{w\in C\mid ||\Phi(R^r(w)),\Phi(R^{r+\ell}(z_0))||<\epsilon\}.
$$
In particular, we obtain $\nu_R(Q)=\nu_R(Q_C)\geq \log d$, where $Q_C$ denotes the restriction of the map $Q$ to the set $\Phi(C)$.
\end{proof}


\begin{thebibliography}{10}

\bibitem{BE}
Walter Bergweiler and Alexandre Eremenko.
\newblock Meromorphic functions with linearly distributed values and julia sets
  of rational functions.
\newblock {\em Proceedings of the American Mathematical Society},
  137(7):2329--2333, 2009.

\bibitem{BM}
Edward Bierstone and Pierre~D. Milman.
\newblock Semianalytic and subanalytic sets.
\newblock {\em Inst. Hautes \'Etudes Sci. Publ. Math.}, 67:5--42, 1988.

\bibitem{Bo}
Luka Boc~Thaler.
\newblock A reconstruction theorem for complex polynomials.
\newblock {\em Internat. J. Math.}, 26(9):1550073, 13, 2015.

\bibitem{Bow}
Rufus Bowen.
\newblock Topological entropy for noncompact sets.
\newblock {\em Transactions of the American Mathematical Society},
  184:125--136, 1973.

\bibitem{D}
Tomasz Downarowicz.
\newblock {\em Entropy in dynamical systems}, volume~18 of {\em New
  Mathematical Monographs}.
\newblock Cambridge University Press, Cambridge, 2011.

\bibitem{Dow}
Tomasz Downarowicz.
\newblock Positive topological entropy implies chaos dc2.
\newblock {\em Proc. Amer. Math. Soc.}, 142:137--149, 2014.

\bibitem{EvS}
Alexandre Eremenko and Sebastian van Strien.
\newblock Rational maps with real multipliers.
\newblock {\em Trans. Amer. Math. Soc.}, 363(12):6453--6463, 2011.

\bibitem{FP}
John~Erik Forn{\ae}ss and Han Peters.
\newblock Complex dynamics with focus on the real part.
\newblock {\em Ergodic Theory Dynam. Systems}, 37(1):176--192, 2017.

\bibitem{F}
Jacques Frisch.
\newblock Points de platitude d'un morphisme d'espaces analytiques complexes.
\newblock {\em Invent. Math.}, 4:118--138, 1967.

\bibitem{H}
Heisuke Hironaka.
\newblock Local analytic dimensions of a subanalytic set.
\newblock {\em Proc. Japan Acad. Ser. A Math. Sci.}, 62(2):73--75, 1986.

\bibitem{L}
Stanis{\l}aw {\L}ojasiewicz.
\newblock On semi-analytic and subanalytic geometry.
\newblock In {\em Panoramas of mathematics ({W}arsaw, 1992/1994)}, volume~34 of
  {\em Banach Center Publ.}, pages 89--104. Polish Acad. Sci. Inst. Math.,
  Warsaw, 1995.

\bibitem{Lyu}
Mikhail Lyubich.
\newblock Entropy properties of rational endomorphisms of the riemann sphere.
\newblock {\em Ergodic Theory and Dynamical Systems}, 898(3):351--385, 1983.

\bibitem{Mane}
Ricardo Ma\~n\' e.
\newblock On the uniqueness of the maximizing measure for rational maps.
\newblock {\em Bol. Soc. Brasil. Math.}, 14:27--43, 1983.

\bibitem{R}
Jes{\'u}s~M. Ruiz.
\newblock On {H}ilbert's 17th problem and real {N}ullstellensatz for global
  analytic functions.
\newblock {\em Math. Z.}, 190(3):447--454, 1985.

\bibitem{T}
Martin Tamm.
\newblock Subanalytic sets in the calculus of variation.
\newblock {\em Acta Math.}, 146:167--199, 1981.

\end{thebibliography}
\end{document}